\definecolor{my-blue}{cmyk}{1,0.6,0,0}
\definecolor{my-green}{cmyk}{0.8,0,1,0.5}
\newcommand\FF{{\mathbb F}}
\newcommand\GG{{\mathbb G}}
\newcommand\NN{{\mathbb N}}
\newcommand\ZZ{{\mathbb Z}}
\newcommand\QQ{{\mathbb Q}}
\newcommand\CC{{\mathbb C}}
\newcommand{\LL}{\mathbb{L}}
\newcommand{\Fq}{{\FF_{\! q}}}
\newcommand{\Fqtimes}{{\FF_{\! q}^\times}}
\newcommand{\Fpp}{{\FF_\pfrak}}
\newcommand\CI{{\mathbb C}_\infty}
\newcommand{\barK}{\overline{K}}
\newcommand{\cA}{\mathcal{A}}
\newcommand\pfrak{{\mathfrak p}}
\newcommand{\degp}{d_\pfrak} 
\newcommand{\mot}{\mathsf{M}}  
\newcommand{\dumot}{\mathfrak{M}}  
\newcommand{\one}{\mathds{1}}
\newcommand{\bigmid}{\, \Big| \,}
\newcommand{\hd}[2]{\partial_t^{(#1)}\!\!\left(#2\right)}
\newcommand{\hde}[1]{\partial_t^{(#1)}}  
\newcommand{\id}{\mathrm{id}}
\newcommand{\tr}{\mathrm{tr}}
\newcommand{\sep}{\mathrm{sep}}
\newcommand{\tate}{\CC_\infty\cs{t}}  
\newcommand{\laurent}{\CC_\infty(\!(t)\!)}  
\newcommand{\ps}[1]{[\![#1]\!]}  
\newcommand{\ls}[1]{(\!(#1)\!)}
\newcommand{\cs}[1]{\langle #1 \rangle} 
\newcommand{\vect}[1]{\text{\boldmath $#1$\unboldmath}} 
\newcommand{\svect}[2]{\left( \begin{matrix} {#1}_{1}\\ \vdots \\ {#1}_{#2}\end{matrix}\right)}
\newcommand{\isom}{\cong}
\newcommand{\betr}[1]{\lvert #1\rvert}
\newcommand{\longsurjarrow}{\longrightarrow \hspace*{-3.3mm}\rightarrow} 
\newcommand{\partdef}[1]{ \left\{ \begin{array}{ll} #1 \end{array} \right. }
\DeclareMathOperator{\Hom}{Hom}
\DeclareMathOperator{\End}{End}
\DeclareMathOperator{\im}{im}
\DeclareMathOperator{\ch}{char}
\DeclareMathOperator{\Mat}{Mat}
\DeclareMathOperator{\GL}{GL}
\DeclareMathOperator{\Gal}{Gal}
\DeclareMathOperator{\Lie}{Lie}
\DeclareMathOperator{\Ker}{ker}
\def\markdef{\bf }
\theoremstyle{plain}
\newtheorem{thm}{Theorem}[section]
\newtheorem{cor}[thm]{Corollary}
\newtheorem{prop}[thm]{Proposition}
\newtheorem*{theoremA}{Theorem \ref{thm:description-of-motivic-grp}}
\newtheorem*{theoremB}{Theorem \ref{thm:description-of-galois-rep}}
\theoremstyle{definition}
\newtheorem{defn}[thm]{Definition}
\newtheorem{exmp}[thm]{Example}
\newtheorem{rem}[thm]{Remark}
\begin{document}

\title[thin t-adic Galois representations]{Anderson t-modules with thin t-adic Galois representations}
\author{Andreas Maurischat}
\address{\rm {\bf Andreas Maurischat}, Lehrstuhl A f\"ur Mathematik, RWTH Aachen University, Germany }
\email{\sf andreas.maurischat@matha.rwth-aachen.de}

\newdate{date}{04}{02}{2021}
\date{\displaydate{date}}




\begin{abstract}
Pink has given a qualitative answer to the Mumford-Tate conjecture for Drinfeld modules in the 90s. He showed that the image of the $\pfrak$-adic Galois representation is $\pfrak$-adically open in the motivic Galois group for any prime $\pfrak$.
In contrast to this result, we provide a family of uniformizable Anderson $t$-modules for which the Galois representations of their $t$-adic Tate-modules are ``far from'' having $t$-adically open image in their motivic Galois groups. Nevertheless, the image is still Zariski-dense in the motivic Galois group which is in accordance to the Mumford-Tate conjecture. For the proof, we explicitly determine the motivic Galois group as well as the Galois representation for these $t$-modules.
\end{abstract}

\maketitle

\setcounter{tocdepth}{1}
\tableofcontents


\section{Introduction}\label{sec:intro}

Given an abelian variety $\mathcal{A}$ of dimension $d$ over a number field $F$, there are several groups attached to it. Among others, there is the associated Hodge group (also called Mumford-Tate group) which is an algebraic subgroup $G_{\mathcal{A}}$ of $\GL(V)\cong \GL_{2d,\QQ}$ where $V:=H_1(\cA(\CC),\QQ)$ denotes the singular homology of $\cA(\CC)$ with respect to some embedding $F\to \CC$. Furthermore,
the rational $\ell$-adic Tate module $V_\ell(\cA)=\left(\varprojlim_{n} \cA[\ell^{n+1}]\right)\otimes_{\ZZ_\ell} \QQ_\ell$ of $\cA$, is naturally isomorphic to $V\otimes_{\QQ} \QQ_\ell$, and carries an action of the absolute Galois group $\Gal(F^{\sep}/F)$ via the action on the torsion points, i.e.~one has a group  homomorphism
\[ \varrho_\ell: \Gal(F^{\sep}/F) \longrightarrow  \GL_{2d}(\QQ_\ell). \]
It is well-known that the image of $\varrho_\ell$ is a compact subgroup of the $\ell$-adic points
 $G_{\mathcal{A}}(\QQ_\ell)$ of $G_{\mathcal{A}}$ (compact in the $\ell$-adic topology).
The Mumford-Tate conjecture asks whether the image is even Zariski-dense.
As the Mumford-Tate group  $G_{\mathcal{A}}$ is always a reductive group over a field of characteristic zero, an equivalent  formulation is whether there is an ($\ell$-adically) open subgroup of $\im(\varrho_\ell)$ which is also an ($\ell$-adically) open subgroup of  $G_{\mathcal{A}}(\QQ_\ell)$.

The same questions arise for Drinfeld modules and Anderson $t$-modules -- the function field analogues of elliptic curves and abelian varieties. In \cite{rp:mtcdm}, Pink showed that the Mumford-Tate conjecture is true for Drinfeld modules by showing the $\pfrak$-adic openness of the image of the Galois representation.
For $\pfrak=t$, this statement has been used by Chang and Papanikolas (see \cite[\S 3.5]{cc-mp:aipldm}) to determine the motivic Galois group of Drinfeld modules using the explicit connection via Anderson generating functions between elements in the Tate module and the rigid analytic trivialization.

In the function field setting, however, reductive groups can have non-open compact $\pfrak$-adic subgroups\footnote{I thank Urs Hartl for pointing that out to me.}. Even more, there exist Anderson $t$-modules whose Mumford-Tate groups are not reductive. For a general treatment of compact subgroups of linear algebraic groups, we refer to Pink's article \cite{rp:cslag}.
 
 
The content of our article is to show that indeed the image of the Galois representation might be not $\pfrak$-adically open and can even have a ``thin image'' (see Def.~\ref{def:thin-image} for the definition). We restrict to the case $\pfrak=t$, but are confident that also the images for other $\pfrak$ can be thin. 
For the proof, we will use the explicit connection between the $t$-adic Tate module and the rigid analytic trivialization of the motive given in \cite{am:ptmsv} which is a generalization of the corresponding one in \cite{cc-mp:aipldm} mentioned above.
A similar approach for other primes $\pfrak$ needs more theoretical background. This is in the scope of a future project.

\medskip

We will show the theorem on thin images by examining an explicit family of $t$-modules where this happens.
The family that we consider are the prolongations $\rho_k C$ of the Carlitz module $C$ which were introduced in \cite{am:ptmaip}. The main results are the computations of the motivic Galois group and the $t$-adic Galois representation for these $t$-modules from which the result on the thin image (Corollary \ref{cor:thin-image}) and the Zariski-density of the image (Corollary \ref{cor:image-Zariski-dense}) are deduced.
The main theorems are (for notation see Section \ref{sec:notation})

\begin{theoremA}
The motivic Galois group $\Gamma_{\rho_k C}$ of the $t$-module $\rho_k C$ is given by
\[   \Gamma_{\rho_k C}(R) = \left\{  \begin{pmatrix} a_0 & a_1 & \cdots & a_k \\ 0 & a_0 & \ddots & \vdots \\ \vdots & \ddots & \ddots &a_1  \\ 0 & \cdots & 0 & a_0 \end{pmatrix} \,\,\middle|\,\, 
 a_0\in R^\times, a_1,\ldots, a_k\in R
\right\} \]
for all $\Fq(t)$-algebras $R$.
\end{theoremA}

\begin{theoremB}
The Galois representation on the $t$-adic Tate module of $\rho_kC$ is given by
\[ \begin{array}{rccccl}
 \varrho_t: &\Gal(K^{\sep}/K) &\longsurjarrow  &\Fq\ps{t}^\times & \longrightarrow &\GL_{k+1}(\Fq\ps{t}),\\ 
 & \gamma &\longmapsto & a:=a_\gamma & \longmapsto & \rho_{[k]}(a)=
\left( \begin{matrix} a   & \hd{1}{a}  & \cdots &\hd{k}{a}  \\ 0 & a  & \ddots & \vdots \\ \vdots & \ddots & \ddots & \hd{1}{a}   \\ 0 & \cdots & 0 & a  \end{matrix} \right).
\end{array} \]
Here the first map is the usual Galois representation on the $t$-adic Tate module of the Carlitz module, and $\hd{j}{a}$ is the $j$-th hyperderivative of $a\in \Fq\ps{t}$.
\end{theoremB}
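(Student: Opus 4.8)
The plan is to trace through the explicit dictionary between the $t$-adic Tate module and the rigid analytic trivialization established in \cite{am:ptmsv}, using the fact that $\rho_k C$ is the $k$-th prolongation of the Carlitz module $C$ as constructed in \cite{am:ptmaip}. First I would recall that the $t$-motive of $\rho_k C$ is built from the Carlitz motive by the prolongation functor $\rho_k$, and that prolongation has a transparent effect on the rigid analytic trivialization: if $\Psi_C$ is the rigid analytic trivialization matrix for the Carlitz motive (essentially $\Omega$, the Anderson-Thakur function, up to normalization), then the one for $\rho_k C$ is obtained by applying hyperderivatives $\hde{j}$ with respect to $t$ and assembling them into the upper-triangular Toeplitz block displayed in Theorem A. Concretely, the matrix entries of the trivialization of $\rho_k C$ are $\hd{j}{\Omega}$ (or the analogous Anderson generating function), $0\le j\le k$.

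Next I would invoke the main comparison result of \cite{am:ptmsv}: for a uniformizable $t$-module whose motive has rigid analytic trivialization $\Psi$, the Galois action on the $t$-adic Tate module is computed by the cocycle $\gamma\mapsto \gamma(\Psi)\Psi^{-1}$, which lands in $\GL$ over $\Fq\ps{t}$ and in fact factors through the Galois group acting on the relevant period coordinates. For the Carlitz module this recovers the classical statement that the $t$-adic Galois representation $\varrho_t^C\colon \Gal(K^{\sep}/K)\to \Fq\ps{t}^\times$ is surjective (this is the first map in the statement, coming from adjoining the Carlitz $t$-power torsion, i.e.\ the $t$-adic cyclotomic character of the Carlitz module). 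The key computation is then: if $\gamma$ acts on the Carlitz period data by the scalar $a_\gamma\in\Fq\ps{t}^\times$, how does it act on $\hd{j}{\Omega}$? Since hyperderivatives satisfy the Leibniz-type rule $\hd{n}{fg}=\sum_{i+j=n}\hd{i}{f}\hd{j}{g}$ and are $\Fq$-linear and continuous, and since $\gamma$ commutes with $\partial_t^{(j)}$ (the action is $\Fq\ps{t}$-semilinear but $t$ is fixed), applying $\gamma$ to the relation coming from $\gamma$-equivariance of the trivialization produces exactly the Toeplitz matrix $\rho_{[k]}(a_\gamma)$ with entries $\hd{j}{a_\gamma}$. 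This is the structural heart of the argument and is where the prolongation formalism pays off: the hyperderivatives appear because prolongation is, by design, the operation that records jets in the $t$-direction.

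I expect the main obstacle to be bookkeeping the precise normalizations and semilinearity conventions so that the cocycle identity $\gamma(\Psi)=\rho_{[k]}(a_\gamma)\cdot\Psi$ comes out with the hyperderivatives in the correct (upper-triangular, not lower-triangular) positions and with no stray Frobenius twists — in particular, one must check that the Galois action on the trivialization of $\rho_k C$ is genuinely obtained from that of $C$ by the same functorial recipe that produces Theorem A's group, and that this is compatible with the identification of the $t$-adic Tate module of $\rho_k C$ with $\Fq\ps{t}^{k+1}$. Once the cocycle is identified with $\gamma\mapsto\rho_{[k]}(a_\gamma)$, the factorization through $\Fq\ps{t}^\times$ via $\gamma\mapsto a_\gamma$ is immediate, and surjectivity of the first map is just the known surjectivity of the Carlitz $t$-adic representation, so the remaining steps are formal.
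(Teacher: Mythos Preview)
Your proposal is correct and hits the same target as the paper: identify the $\Fq\ps{t}$-basis of $\hat{H}_{\rho_k C}$ with the columns of $\rho_{[k]}(\omega)$, use that the Carlitz representation is $\gamma(\omega)=a_\gamma\omega$ coefficient-wise, compute the action on $\hd{j}{\omega}$, and conclude via the ring homomorphism property of $\rho_{[k]}$ that $\gamma(\rho_{[k]}(\omega))=\rho_{[k]}(a_\gamma\omega)=\rho_{[k]}(a_\gamma)\rho_{[k]}(\omega)$.

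The route differs slightly from the paper in the order and in the key computation. The paper first proves independently (Prop.~\ref{prop:generators-of-torsion-extension} and Cor.~\ref{cor:same-extensions}) that $K(\rho_k C[t^\infty])=K(C[t^\infty])$, so the representation factors through $\Fq\ps{t}^\times$ a priori; then it computes the action on $\hd{j}{\omega}$ by expanding in Taylor coefficients at $t=0$ and invoking the explicit formula $a*(\hd{l}{\omega})(0)=(\hd{l}{a\omega})(0)$ from \cite{am-rp:iddbcppte}. You instead argue directly that the coefficient-wise Galois action commutes with the hyperdifferential operators $\hde{j}$ (the binomial coefficients lie in $\FF_p$, hence are Galois-fixed), so $\gamma(\hd{j}{\omega})=\hd{j}{\gamma(\omega)}=\hd{j}{a_\gamma\omega}$; the factorization through $\Fq\ps{t}^\times$ then falls out of the formula rather than being established beforehand. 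Your argument is shorter and avoids the auxiliary results on torsion extensions; the paper's version is more explicit and makes the generators of the torsion fields visible, which is useful elsewhere (e.g.\ in the density computation of Cor.~\ref{cor:thin-image}). Your parenthetical ``the action is $\Fq\ps{t}$-semilinear but $t$ is fixed'' should be sharpened to ``the Galois action is coefficient-wise on $\CI\ps{t}$ and $\hde{j}$ is $\CI$-linear with structure constants in $\FF_p$''; once stated this way the commutation is immediate.
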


\begin{rem}
We expect that one can generalize the result on the thin image to prolongations of any abelian uniformizable $t$-modules, but we will not pursue  it in this paper.
\end{rem}

\begin{rem}
Recently, M.~Frantzen \cite{mf:novgram} discovered other examples of $t$-modules for which the Galois image is not $t$-adically open in the motivic Galois group. These examples have rank $1$ and dimension $d$ divisible by the characteristic $p$, i.e. they are $t$-modules which become isomorphic over $\CI$ to
the tensor powers $C^{\otimes d}$ of the Carlitz module (see Thm.~5.2 ibid.). We compute the density for these Galois representations in Theorem \ref{thm:tensor-powers}.
\end{rem}

\section{Notation}\label{sec:notation}

\subsection{Rings and operators}\label{subsec:rings-ops}

Let $K=\Fq(\theta)$ be the rational function field over the finite field $\Fq$ with $q$ elements and characteristic $p$. Its completion with respect to the absolute value determined by $\betr{\theta}=q$ is the Laurent series ring $K_\infty:=\Fq\ls{1/\theta}$, and we let $\CC_\infty=\widehat{\overline{K_\infty}}$ be the completion  of the algebraic closure $\overline{K_\infty}$ of $K_\infty$. We also fix an embedding of the algebraic closure $\barK$ of $K$ into $\overline{K_\infty}$.

On $\CI$ and all its subfields, we have the $q$-power Frobenius endomorphism $\tau:\CI\to \CI,x\mapsto x^q$ which will be extended $t$-linearly to the polynomial ring $\CI[t]$ (still denoted by $\tau$). The inverse of the automorphism $\tau$ will be denoted by $\sigma$.

We consider the skew-polynomial ring $K\{\tau\}=\{ \sum_{k=0}^n x_k\tau^k\mid n\geq 0, x_k\in K\}$ of $\Fq$-linear polynomial maps with multiplication rule $\tau\cdot x=x^q\cdot \tau$ induced from composition of maps, as well as the skew-polynomial ring $\barK\{\sigma\}=
\{ \sum_{k=0}^n x_k\sigma^k\mid n\geq 0, x_k\in \barK\}$ with multiplication rule 
$\sigma\cdot x=x^{1/q}\cdot \sigma$. For later use, we already mention that $\barK\{\sigma\}$ is the opposite ring to 
$\barK\{\tau\}$ via the anti-isomorphism $\sum_{k=0}^n x_k\tau^k\mapsto \sum_{k=0}^n \sigma^k x_k= \sum_{k=0}^n (x_k)^{1/q^k}\cdot \sigma^k$.

We denote by $\CI\cs{t}$ the Tate algebra over $\CI$. This is the Gauss completion of $\CI[t]$, i.e.~the algebra of power series $\sum_{n=0}^\infty x_nt^n\in \CI\ps{t}$ such that $\lim_{n\to \infty} \betr{x_n}=0$. Its field of fractions will be denoted by $\LL$. The automorphisms $\tau$ and $\sigma$ will be extended continuously to $\tate$ (and further to $\LL$), i.e.
\[\tau\left( \sum_{n=0}^\infty x_nt^n\right)=\sum_{n=0}^\infty x_n^qt^n\quad \text{and}\quad
\sigma\left( \sum_{n=0}^\infty x_nt^n\right)=\sum_{n=0}^\infty (x_n)^{1/q}t^n. \]

\medskip 

On the Laurent series ring $\laurent$ (in which all the rings above embed) we consider the hyperdifferential operators with respect to $t$, i.e.~the sequence of $\CI$-linear maps $(\hde{n})_{n\geq 0}$ given by
\[    \hd{n}{\sum_{i=i_0}^\infty x_it^i} =\sum_{i=i_0}^\infty \binom{i}{n} x_it^{i-n}. \]
where $\binom{i}{n}\in \FF_{\! p}\subset \Fq$ is the binomial coefficient modulo $p$.\footnote{The operator $\hde{n}$ is the positive characteristic analogue of $\frac{1}{n!}\left(\frac{d}{dt}\right)^n$.}
The main properties of the hyperdifferential operators are the generalized Leibniz rule
\begin{equation}\label{eq:leibniz-rule}
\hd{n}{f\cdot g}=\sum_{i=0}^n \hd{i}{f}\cdot \hd{n-i}{g}
\end{equation}
for all $n\geq 0$, and all $f,g\in \laurent$, as well as 
the iteration rule
\begin{equation} \label{eq:iteration-rule}
\hde{n}\circ \hde{m}=\binom{n+m}{n} \hde{n+m}
\end{equation}
for all $n,m\geq 0$.

If $f$ is a power series $f(t)\in \CI\ps{t}$, one has the familiar Taylor series identity
\begin{equation}\label{eq:taylor-identity}
 f(t)= \sum_{i\geq 0} (\hd{i}{f})(0) t^i, 
\end{equation}
where $(\hd{i}{f})(0)$ is the value of the hyperderivative  $\hd{i}{f}$ at $0$.

It is obvious that the hyperdifferential operators commute with the twistings $\tau$ and~$\sigma$.

For more background on hyperdifferential operators see for example \cite[\S 27]{hm:crt} (called iterative higher derivations there).

\medskip

From the hyperdifferential operators, one gets a family (in $k \geq 0$) of 
maps
 $\rho_{[k]} : \laurent \rightarrow \Mat_{(k+1) \times (k+1)}(\laurent)$ sending $f\in \laurent$ to
\begin{equation}\label{eq:rho_n}
 \rho_{[k]}(f) := \begin{pmatrix} f & \hd{1}{f} & \cdots & \hd{k}{f} \\ 0 & f & \ddots & \vdots \\ \vdots & \ddots & \ddots & \hd{1}{f} \\ 0 & \cdots & 0 & f \end{pmatrix}.
\end{equation}
A crucial point is that these maps are even homomorphisms of $\CI$-algebras (see \cite[\S 2.2]{am-rp:iddbcppte}
or \cite[\S 2.1]{am:ptmaip}).

\subsection{$t$-modules, $t$-motives and dual $t$-motives}\label{subsec:t-modules-et-al}

For $t$-modules, $t$-motives and dual $t$-motives, we will use the notation as in \cite{am:ptmaip} and  \cite{db-mp:ridmtt}.

A {\markdef $t$-module} $(E,\Phi)$ over $K$ (or shortly, $E$) consists of an algebraic group $E$  over $K$ with $\Fq$-action which is isomorphic to $\GG_a^d$ for some $d>0$ (also called $\Fq$-module scheme), and an $\Fq$-algebra homomorphism
\[ \Phi:\Fq[t]\to \End_{{\rm grp},\Fq}(E)\isom \Mat_{d\times d}(K\{\tau\}),t\mapsto \Phi_t, \]
with the additional property that $\Phi_t-\theta\cdot \id_E$ induces a nilpotent endomorphism on $\Lie(E)$.

To a $t$-module $(E,\Phi)$ one associates a {\markdef $t$-motive}. This is the left $K[t]\{\tau\}$-module 
$\mot(E):=\Hom_{{\rm grp},\Fq}(E,\GG_a)$ with $t$-action given by composition with 
$\Phi_t\in \End_{{\rm grp},\Fq}(E)$ and left-$K\{\tau\}$-action given by composition
with elements in $K\{\tau\}\isom \End_{{\rm grp},\Fq}(\GG_a)$. The $t$-motive $\mot(E)$ and the $t$-module $E$ are called {\markdef abelian}, if the $t$-motive $\mot(E)$ is finitely generated as $K[t]$-module in which case it is even free as $K[t]$-module.\footnote{Be aware that in other sources abelianess is often part of the definition of a $t$-motive, e.g.~in Anderson's original definition in \cite{ga:tm}. The last statement on the freeness is also proven ibid.}

Similarly, the {\markdef dual $t$-motive} associated to a $t$-module $(E,\Phi)$ is the left $\barK[t]\{\sigma\}$-module
$\dumot(E):=\Hom_{{\rm grp},\Fq}(\GG_{a,\barK},E_{\barK})$ with $t$-action given by composition with 
$\Phi_t\in \End_{{\rm grp},\Fq}(E)$ and left-$\barK\{\sigma\}$-action given by right composition
with elements in $\barK\{\tau\}\isom \barK\{\sigma\}^{\rm op}$. 
The dual $t$-motive $\dumot(E)$ and the $t$-module $E$ are called {\markdef $t$-finite} if $\dumot(E)$ is finitely generated as $\barK[t]$-module in which case it is even free as $\barK[t]$-module.

For an abelian $t$-motive $\mot(E)$, let $\vect{m}=(m_1,\ldots,m_r)^\tr$ be a $K[t]$-basis (written as a column vector). Then a {\markdef rigid analytic trivialization} (if it exists) is a matrix $\Upsilon\in \GL_{r}(\tate)$
such that $\tau(\Upsilon\vect{m})= \Upsilon \vect{m}$.

Similarly, for a $t$-finite dual $t$-motive $\dumot(E)$, let $\vect{e}=(e_1,\ldots,e_r)^\tr$ be a $\barK[t]$-basis. Then a {\markdef rigid analytic trivialization} (if it exists) is a matrix $\Psi\in \GL_{r}(\tate)$
such that $\sigma(\Psi^{-1}\vect{e})= \Psi^{-1} \vect{e}$.

The $t$-modules that we consider in this paper are both abelian and $t$-finite. Furthermore, their $t$-motives resp.~dual $t$-motives are both rigid analytically trivial which is equivalent to the $t$-module being \emph{uniformizable}.

\begin{exmp}\label{exmp:carlitz}
The Carlitz module $(C,\phi)$ over $K$ is the additive group scheme $\GG_{a,K}$ over $K$ with $\Fq[t]$-action \[ \phi:\Fq[t]\to \End_{{\rm grp},\Fq}(\GG_{a,K})\isom K\{\tau\}\]
given by $\phi_t=\theta+\tau$.

Its corresponding $t$-motive $\mot(C)$ is given as $\mot(C)=K[t]e$ with $\tau$-action determined by 
$\tau(e)=(t-\theta)e$, and its rigid analytic trivialization is $\Upsilon=\omega^{-1}$ where $\omega\in \CI\cs{t}$ is the Anderson-Thakur function
\[ \omega(t)=\sqrt[q-1]{-\theta}\cdot \prod_{i=0}^\infty \left(1-\frac{t}{\theta^{q^i}}\right)^{-1} \]
for a fixed choice of $(q-1)$-st root $\sqrt[q-1]{-\theta}$ of $-\theta$ (see \cite[Proof of Lemma 2.5.4]{ga-dt:tpcmzv}).

 The dual $t$-motive $\dumot(C)$ is given as $\dumot(C)=\barK[t]e$ with $\sigma$-action determined by $\sigma(e)=(t-\theta)e$, and its rigid analytic trivialization is $\Psi=(t-\theta)^{-1}\omega^{-1}$.
 
In both cases, we chose $e=\id_{\GG_{a}}$ as basis vector.
\end{exmp}

The $t$-modules that we are mainly dealing with in this paper are the \emph{prolongations} of the Carlitz module as defined in \cite[Sect.~6]{am:ptmaip} which we recall in Section \ref{sec:family}.

\subsection{The Tate module}\label{subsec:tate-module}

The $t$-adic Tate module of a $t$-module $(E,\Phi)$ is the $\Fq\ps{t}$-module
\[ T_t(E):=\varprojlim_{n} E[t^{n+1}],\quad \text{where }E[t^{n+1}]=\{e\in E(\CI)\mid 
\Phi_{t^{n+1}}(e)=0\} \]
denote the $t^{n+1}$-torsion points of $E$. The torsion points even lie in $E(K^{\sep})$ (as $
\Phi_{t^{n+1}}$ is a finite morphism), and the Tate module $T_t(E)$ is equipped with a continuous action of the absolute Galois group $\Gal(K^{\sep}/K)$. After a choice of $\Fq\ps{t}$-basis of $T_t(E)$, this leads to a continuous representation $\varrho_t:\Gal(K^{\sep}/K)\to \GL_r(\Fq\ps{t})$ where
$r={\rm rk}_{\Fq\ps{t}}(T_t(E))$ is the rank of the Tate module.

In \cite{am:ptmsv}, we proved an explicit isomorphism between the Tate module $T_t(E)$ and the $\tau$-invariants
$\Hom_{K[t]}^\tau(\mot(E),\CI\ps{t})$ of the $\CI\ps{t}$-dual of the $t$-motive $\mot(E)$. 
In this article, however, we only need the description of the Tate module coming from this isomorphism in the case where $\mot(E)$ is abelian and rigid analytically trivial. We explain this description in the following.

Let $E(\CI)\ps{t}$ be the formal power series in $t$ with coefficients in the $\CI$-points of $E$, and the $\Fq\ps{t}$-action by formal multiplication using the $\Fq$-action on the $\Fq$-module scheme $E$, i.e.~
\[ \left(\sum_{i\geq 0} c_it^i\right)\cdot \left(\sum_{n\geq 0} e_nt^n\right) = \sum_{m\geq 0}\left(\sum_{i+n=m} c_i\cdot e_n\right)t^m \]
for $\sum_{i\geq 0} c_it^i\in \Fq\ps{t}$ and $\sum_{n\geq 0} e_nt^n\in E(\CI)\ps{t}$.
Then $T_t(E)$ is isomorphic as $\Fq\ps{t}$-module to
the $\Fq\ps{t}$-subspace
\[ \hat{H}_E:=\left\{ \sum_{n\geq 0} e_nt^n \in E(\CI)\ps{t} \,\middle|\,
\sum_{n\geq 0} \Phi_t(e_n)t^n = \sum_{n\geq 0} e_nt^{n+1} \right\} \]
 by sending a compatible system $(e_n)_{n\geq 0}$ of $t^{n+1}$-torsion points to the series 
$\sum_{n\geq 0} e_nt^n$  (see \cite[Prop.~3.2]{am:ptmsv}).

Furthermore, assume that $\mot(E)$ is abelian and rigid analytically trivial.
%
%
%
Let $m_1,\ldots, m_r\in \mot(E)$ be a $K[t]$-basis of $\mot(E)$, and $\kappa_1,\ldots, \kappa_d\in \mot(E)$ a $K\{\tau\}$-basis of $\mot(E)$. Hence, $\kappa_1,\ldots, \kappa_d$ induce a \emph{choice of coordinate system}, i.e.~an isomorphism of $\Fq$-module schemes
\[ \kappa:E\to \GG_a^d,e\mapsto \left(\begin{smallmatrix}
\kappa_1(e) \\ \vdots \\ \kappa_d(e) \end{smallmatrix}\right). \]
Let $S\in \Mat_{d\times r}(K[t])$ be such that
  \[  \svect{\kappa}{d} = S\cdot \svect{m}{r},\]
  which exists, since $\{m_1,\ldots, m_r \}$ is a $K[t]$-basis.
 
Further, let $\Upsilon\in \GL_r(\CI\cs{t})\subseteq  \GL_r(\CI\ps{t})$ be a rigid analytic trivialization for $\mot(E)$ with respect to the basis $\{m_1,\ldots, m_r\}$.
Then we have the following.
\begin{prop}[{cf.~\cite[Prop.~5.3]{am:ptmsv}}] \label{prop:basis-of-h-hat}
With the previous notation, an $\Fq\ps{t}$-basis of $\hat{H}_E\subseteq E(\CI)\ps{t}\cong (\CI\ps{t})^d$ is given 
by the columns of $ S\cdot \Upsilon^{-1}$ with respect to the coordinate system $\kappa$.
\end{prop}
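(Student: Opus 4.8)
The plan is to combine the two descriptions of the Tate module recalled above: the concrete one via $\hat H_E \subseteq E(\CI)\ps{t}$, and the one coming from the rigid analytic trivialization of the $t$-motive. Concretely, I would first recall that $T_t(E)\cong \hat H_E$, so it suffices to exhibit an $\Fq\ps{t}$-basis of $\hat H_E$. The defining condition $\sum_n \Phi_t(e_n)t^n = \sum_n e_n t^{n+1}$ for $\sum_n e_n t^n\in E(\CI)\ps{t}$ translates, after applying the coordinate system $\kappa$ and writing an element of $E(\CI)\ps{t}$ as a column vector $v\in(\CI\ps{t})^d$, into a $\tau$-semilinear functional equation: writing $\Phi_t = \sum_j B_j\tau^j$ with $B_j\in\Mat_{d\times d}(K)$, the condition becomes $\sum_j B_j\,\tau^j(v) = t\cdot v$. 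So $\hat H_E$ is identified with the solution space in $(\CI\ps{t})^d$ of this equation.

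Next I would bring in the $t$-motive side. Dualizing: an element of $\Hom_{K[t]}(\mot(E),\CI\ps{t})$ is determined by its values on the $K[t]$-basis $m_1,\dots,m_r$, i.e.\ by a row vector $(\lambda_1,\dots,\lambda_r)\in(\CI\ps{t})^r$; it is $\tau$-invariant (in $\Hom^\tau_{K[t]}$) precisely when $(\lambda_1,\dots,\lambda_r)\,\Phi = \tau(\lambda_1,\dots,\lambda_r)$, where $\Phi\in\Mat_r(K[t])$ is the matrix of $\tau$ in the basis $\vect m$. Since $\Upsilon$ is a rigid analytic trivialization, $\tau(\Upsilon\vect m)=\Upsilon\vect m$ gives $\tau(\Upsilon)\Phi = \Upsilon$ (reading off the matrix of $\tau$ carefully with the column-vector convention), so the columns of $\Upsilon^{-1}$ give an $\Fq\ps{t}$-basis of the $\tau$-invariant functionals: indeed $\tau(\Upsilon^{-1})$ paired against $\Phi$ returns $\Upsilon^{-1}$, and any $\tau$-invariant solution is an $\Fq\ps{t}=\laurent^{?}$-linear combination of these because the twist-invariants of $\GL_r(\CI\ps{t})$-torsors are an $\Fq\ps{t}$-form. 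This is exactly \cite[Prop.~5.3]{am:ptmsv}. Then I would use the relation $\kappa = S\,\vect m$ to transport a functional $\ell = (\lambda_1,\dots,\lambda_r)\cdot(-)$ on $\mot(E)$, expressed on the $K[t]$-basis, to its values on the coordinate functionals $\kappa_1,\dots,\kappa_d$: these are $S\cdot(\lambda_1,\dots,\lambda_r)^\tr$. Applying this with $(\lambda_1,\dots,\lambda_r)^\tr$ running over the columns of $\Upsilon^{-1}$ yields precisely the columns of $S\,\Upsilon^{-1}$, and one checks these columns indeed lie in (and span) $\hat H_E$ in the coordinate system $\kappa$, because the $\tau$-invariance of the functional side matches the functional equation $\sum_j B_j\tau^j(v)=tv$ on the $\hat H_E$ side under the identification of $\End_{\mathrm{grp},\Fq}(E)$ with $\Mat_d(K\{\tau\})$ acting on $E(\CI)\ps{t}$.

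The main obstacle, and the point I would be most careful about, is bookkeeping the several transposes and the left/right conventions: $\mot(E)$ is a \emph{left} $K[t]\{\tau\}$-module, bases are written as column vectors, $\tau$ acts semilinearly, and the functionals live on the dual side, so whether $\Upsilon$ or $\transp{\Upsilon}$ or $\Upsilon^{-1}$ appears — and whether $S$ multiplies on the left or right — has to be pinned down once and propagated consistently. A secondary point is the convergence/analyticity claim: one must note $\Upsilon\in\GL_r(\CI\cs{t})$ (not merely $\GL_r(\CI\ps{t})$), so $S\Upsilon^{-1}$ has entries in $\CI\cs{t}\subseteq\CI\ps{t}$ and the columns really define elements of $E(\CI)\ps{t}$; and that the $\Fq\ps{t}$-span of these columns is \emph{all} of $\hat H_E$, which follows from comparing $\Fq\ps{t}$-ranks: both sides have rank $r=\rank\mot(E)$, the left because $\Upsilon^{-1}$ is invertible and $S$ has rank $r$ over $K[t]$, the right because $\hat H_E\cong T_t(E)$ has rank $r$ by the theory of Tate modules of abelian $t$-modules. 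Once the conventions are fixed this is essentially a direct citation of \cite[Prop.~5.3]{am:ptmsv} combined with the coordinate change $\kappa=S\vect m$, so I would keep the argument short and refer to \emph{loc.\ cit.}\ for the verification of the functional equation.
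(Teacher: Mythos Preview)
The paper gives no proof of this proposition at all: it is stated with the attribution ``cf.\ \cite[Prop.~5.3]{am:ptmsv}'' and used as a black box. Your proposal is a correct outline of the argument behind that cited result---identify $\hat H_E$ with $\Hom_{K[t]}^\tau(\mot(E),\CI\ps{t})$, read off a basis of the latter from the rigid analytic trivialization $\Upsilon$, and transport to coordinates via $S$---and you yourself note at the end that it amounts to citing \emph{loc.\ cit.}; so your approach agrees with the paper's, which is simply to invoke the reference.
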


\section{Density of Galois representations}\label{sec:density}

In this section, we introduce the notion of \emph{density} of Galois representations, and define when we consider the image to be \emph{thin}.

Let $\pfrak\lhd \Fq[t]$ be a prime ideal, and $A_\pfrak$ be the $\pfrak$-adic completion of $A:=\Fq[t]$. Let $\Fpp:= \Fq[t]/\pfrak$ be its residue field, and $\degp:=[\Fpp:\Fq]$ the residue degree.

Throughout this section, let $\varrho:\Gal(K^\sep/K)\to \GL_r(A_\pfrak)$ be a continuous $\pfrak$-adic Galois representation, and $\Gamma(A_\pfrak):=\overline{\im(\varrho)}$ be the  Zariski-closure of $\im(\varrho)$ in $\GL_r(A_\pfrak)$.

\begin{defn}\label{def:thin-image}
For $N\geq 1$, let $\varrho_{N}:\Gal(K^{\sep}/K)\to \GL_r(A_\pfrak)\to  \GL_r(A_\pfrak/(\pfrak^N))$ be the composition of $\varrho$ with the reduction modulo $\pfrak^N$, and let $D(N)$ be the order of the image $\im(\varrho_{N})$ (which obviously is a finite group).
Further, let $\dim(\Gamma)$ be the dimension of $\Gamma$ as a linear algebraic group.

We define the {\markdef density} $\delta(\varrho)$ of $\varrho$ (in $\Gamma$) as
\[  \delta(\varrho):= \limsup_{N\to \infty} \frac{\log_q(D(N))}{N\cdot \degp \cdot \dim(\Gamma)}. \]

We say that the Galois representation $\varrho$ has a {\markdef thin image} if $\delta(\varrho)<1$.
\end{defn}

In the following proposition, we show that this naming makes sense.

\begin{prop}
\begin{enumerate}
\item The density $\delta(\varrho)$ is bounded by $1$.
\item If the image $\im(\varrho)$ is $\pfrak$-adically open in $\Gamma(A_\pfrak)$, then
$\delta(\varrho)=1$. In particular, a thin image is never open.
\end{enumerate}
\end{prop}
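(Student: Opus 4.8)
The two parts are closely related, so I would prove (1) and (2) together by understanding the growth of $D(N)$. The key observation is that $\Gamma=\Gamma(A_\pfrak)$ is a linear algebraic group of dimension $\dim(\Gamma)$ over the fraction field of $A_\pfrak$, so its $\pfrak$-adic points $\Gamma(A_\pfrak)$ form a $\pfrak$-adic analytic group, and the reductions $\Gamma(A_\pfrak/\pfrak^N)$ have cardinality growing like $q^{N\degp\dim(\Gamma)}$ up to a bounded multiplicative error. More precisely, I would first record the estimate
\[ \log_q\bigl(\#\Gamma(A_\pfrak/\pfrak^N)\bigr) = N\cdot\degp\cdot\dim(\Gamma) + O(1), \]
which follows from the theory of smooth group schemes / Hensel's lemma applied to a model of $\Gamma$ over $A_\pfrak$ (after possibly shrinking to a subgroup or passing to large $N$), or alternatively from the standard fact that a $\pfrak$-adic analytic group of dimension $\delta$ has $[\Gamma:\Gamma_N]\sim q^{N\degp\delta}$ for its congruence filtration $\Gamma_N$. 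Granting this, since $\im(\varrho_N)\subseteq\Gamma(A_\pfrak/\pfrak^N)$ we get $D(N)\le \#\Gamma(A_\pfrak/\pfrak^N)$, hence $\log_q D(N)\le N\degp\dim(\Gamma)+O(1)$, and dividing by $N\degp\dim(\Gamma)$ and taking $\limsup$ gives $\delta(\varrho)\le 1$, which is (1).

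For part (2), suppose $\im(\varrho)$ is $\pfrak$-adically open in $\Gamma(A_\pfrak)$. Then $\im(\varrho)$ contains a congruence subgroup: there is some $M\ge 1$ with $\Gamma(A_\pfrak)_M\subseteq\im(\varrho)$, where $\Gamma(A_\pfrak)_M$ denotes the kernel of reduction $\Gamma(A_\pfrak)\to\Gamma(A_\pfrak/\pfrak^M)$ intersected with $\Gamma(A_\pfrak)$. For $N\ge M$, the image of $\Gamma(A_\pfrak)_M$ in $\Gamma(A_\pfrak/\pfrak^N)$ has index dividing $\#\Gamma(A_\pfrak/\pfrak^M)$ (a fixed constant), so
\[ D(N) = \#\im(\varrho_N) \ge \#\bigl(\text{image of }\Gamma(A_\pfrak)_M\text{ in }\Gamma(A_\pfrak/\pfrak^N)\bigr) \ge \frac{\#\Gamma(A_\pfrak/\pfrak^N)}{\#\Gamma(A_\pfrak/\pfrak^M)}. \]
Taking $\log_q$, using the estimate above, and dividing by $N\degp\dim(\Gamma)$, the constant terms wash out in the limit and we obtain $\liminf_{N} \frac{\log_q D(N)}{N\degp\dim(\Gamma)}\ge 1$; combined with (1) this forces $\delta(\varrho)=1$. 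The final sentence "a thin image is never open" is then the contrapositive: $\delta(\varrho)<1$ rules out openness.

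**Main obstacle.** The routine calculations above all hinge on the single quantitative input $\log_q\#\Gamma(A_\pfrak/\pfrak^N)=N\degp\dim(\Gamma)+O(1)$, and getting this cleanly is the real work. The subtlety is that $\Gamma$ is only defined as a Zariski-closure inside $\GL_r(A_\pfrak)$, so one must fix an integral model (e.g. the schematic closure of $\Gamma$ in $\GL_{r,A_\pfrak}$) and control its special fibre; over a field of positive characteristic one should be slightly careful that the generic fibre of this model is smooth of the expected dimension, but this is automatic after inverting $\pfrak$ since we are over a field, and for the asymptotics only the behaviour for large $N$ matters, so non-smoothness of the special fibre only affects the $O(1)$ term. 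An alternative, perhaps cleaner route that avoids scheme-theoretic models altogether is to invoke the structure theory of compact $\pfrak$-adic analytic groups (as in Pink's \cite{rp:cslag}): $\overline{\im(\varrho)}\cap\Gamma(A_\pfrak)$ contains an open compact subgroup which is a $\pfrak$-adic analytic group of dimension $\dim(\Gamma)$, whose congruence filtration has the stated index growth; I would phrase the argument this way if a reference for the needed growth estimate is readily at hand.
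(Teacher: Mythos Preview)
Your proposal is correct and follows essentially the same route as the paper. The paper makes your ``alternative, cleaner route'' the primary argument: it invokes \cite[Prop.~6.3(a)]{rp:cslag} directly to obtain $G_n/G_{n+1}\cong \Lie(\Gamma)\otimes_{A_\pfrak}(\pfrak^n/\pfrak^{n+1})$ for the congruence filtration $G_n=\Ker\bigl(\Gamma(A_\pfrak)\to \Gamma(A_\pfrak/\pfrak^n)\bigr)$, which immediately yields the exact count $\#(G_n/G_N)=q^{(N-n)\degp\dim(\Gamma)}$ and thereby sidesteps the integral-model and special-fibre worries you raised; from there the bounds $D(N)\le \#(\Gamma(A_\pfrak)/G_N)$ and (under openness) $D(N)\ge \#(G_n/G_N)$ give parts (1) and (2) exactly as you outlined.
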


\begin{proof}
For $n\geq 1$, let $G_n:=\Ker\bigl(\Gamma(A_\pfrak)\to \Gamma(A_\pfrak/\pfrak^n)\bigr)$ be the kernel of the reduction map. Then by \cite[Prop.~6.3(a)]{rp:cslag}, 
$G_n/G_{n+1}\isom \Lie(\Gamma)\otimes_{A_\pfrak}\left(\pfrak^n/\pfrak^{n+1}\right)$ is an $\Fpp$-vector space of dimension $\dim_{A_\pfrak}\Lie(\Gamma)=\dim(\Gamma)$.
Hence, for arbitrary $N>n$ we have
\[  \#(G_n/G_N)=q^{(N-n)\cdot \degp\cdot \dim(\Gamma)} \]
as well as
\[ \# (\Gamma(A_\pfrak)/G_N)=\left( \# \Gamma(\Fpp)\right) \cdot q^{(N-1)\cdot \degp\cdot \dim(\Gamma)} \]
Therefore, we obtain
\begin{eqnarray*}
\delta(\varrho) &=&  \limsup_{N\to \infty} \frac{\log_q(D(N))}{N\cdot \degp \cdot \dim(\Gamma)} \\
&\leq &   \limsup_{N\to \infty} \frac{\log_q(\# \Gamma(\Fpp)) + (N-1)\cdot \degp\cdot \dim(\Gamma)}{N\cdot \degp \cdot \dim(\Gamma)} =1
\end{eqnarray*}   
If $\im(\varrho)$ is open in $\Gamma$, then there is some $n\in \NN$ such that $\im(\varrho)\supset G_n$, and we obtain for all $N>n$:
\[  D(N)\geq \# (G_n/G_N)=q^{(N-n)\cdot \degp\cdot \dim(\Gamma)}. \]
Therefore,
\[  \delta(\varrho)= \limsup_{N\to \infty} \frac{\log_q(D(N))}{N\cdot \degp\cdot \dim(\Gamma)} \geq \lim_{N\to \infty} \frac{(N-n)}{N}=1. \qedhere \]
\end{proof}

\section{Prolongations of the Carlitz module}\label{sec:family}

In this section, we recall the \emph{prolongations} of the Carlitz module as defined in \cite[Sect.~6]{am:ptmaip}, as well as the explicit descriptions. Those we will need for computing the motivic Galois group and the $t$-adic Galois representation of their Tate module, and proving that the image of the representation is thin.

\medskip

For $k\geq 1$,  the {\markdef $k$-th prolongation} of the Carlitz module $(C,\phi)$, denoted by $(\rho_k C,\rho_k\phi)$, is the algebraic group $\GG_a^{k+1}$ with $t$-action given by 
\[ (\rho_k\phi)_t = 
 \begin{pmatrix}
\theta & -1 & 0 &  \cdots & 0 \\
0 & \theta &  \ddots &  \ddots &  \vdots \\
\vdots &  \ddots& \ddots & \ddots & 0 \\
\vdots & &  \ddots& \ddots & -1\\
0 & \cdots& \cdots & 0&  \theta 
\end{pmatrix}+ \one_{k+1}\cdot \tau, \]
where $\one_{k+1}$ is the identity matrix in $\GL_{k+1}(K)$.
Compared to the presentation in \cite{am:ptmaip}, we use the reversed basis here.

Let $\{m_0,\ldots, m_k\}$ be the corresponding coordinate functions which not only form a $K\{\tau\}$-basis, but also a $K[t]$-basis of the associated $t$-motive $\mot(\rho_k C)$. The $\tau$-action is given by
\[  \tau\begin{pmatrix}
m_0\\ m_1\\ \vdots \\ m_k \end{pmatrix} = 
\begin{pmatrix}
t-\theta & 1 & 0 &  \cdots & 0 \\
0 & t-\theta &  \ddots &  \ddots &  \vdots \\
\vdots &  \ddots& \ddots & \ddots & 0 \\
\vdots & &  \ddots& \ddots & 1\\
0 & \cdots& \cdots & 0&  t-\theta 
\end{pmatrix} \cdot   \begin{pmatrix}
m_0\\ m_1\\ \vdots \\ m_k \end{pmatrix} =  \rho_{[k]}(t-\theta) \cdot  \begin{pmatrix}
m_0\\ m_1\\ \vdots \\ m_k \end{pmatrix}
.\]
As it is common, we use a column vector with entries in the motive here, as e.g.~Papanikolas \cite{mp:tdadmaicl} does. This is different to the notation in \cite{am:ptmaip}, but compared to that article, we also chose the reversed basis. In total, we ``accidentally'' obtain the same matrix as given there.

A rigid analytic trivialization for $\mot(\rho_k C)$ with respect to this basis is given by
\[   \Upsilon = \rho_{[k]}(\omega^{-1})=  \begin{pmatrix} \omega & \hd{1}{\omega} & \cdots & \hd{k}{\omega} \\ 0 & \omega & \ddots & \vdots \\ \vdots & \ddots & \ddots & \hd{1}{\omega} \\ 0 & \cdots & 0 & \omega \end{pmatrix}^{-1} \in \Mat_{(k+1)\times (k+1)}(\tate)
, \] 
where $\omega(t)\in \tate$ is the Anderson-Thakur function (comp.~Example \ref{exmp:carlitz}), 
and $\hd{j}{\omega}$ is the $j$-th hyperderivative of $\omega$ as defined in Section \ref{subsec:rings-ops}.

\medskip

Let's now consider the subspace $\hat{H}_{\rho_k C}\subset \rho_k C(\CI)\ps{t}$ defined in Section \ref{subsec:tate-module}. Since the given $K[t]$-basis $\{m_0,\ldots, m_k\}$ is also a $K\{\tau\}$-basis, the base change matrix $S$ in Proposition~\ref{prop:basis-of-h-hat}  is the identity matrix, and a basis for $\hat{H}_{\rho_k C}$ is therefore given by the columns of the matrix
\[  \Upsilon^{-1}= \rho_{[k]}(\omega)=  \begin{pmatrix} \omega & \hd{1}{\omega} & \cdots & \hd{k}{\omega} \\ 0 & \omega & \ddots & \vdots \\ \vdots & \ddots & \ddots & \hd{1}{\omega} \\ 0 & \cdots & 0 & \omega \end{pmatrix}. \]

\section{The Motivic Galois group}\label{sec:motivic-grp}

As in the previous section, let $\Upsilon=\rho_{[k]}(\omega)^{-1}$ be the rigid analytic trivialization of $\mot(\rho_k C)$. By \cite[Prop.~6.2]{am:ptmaip}, the matrix $\Psi:=\rho_{[k]}(t-\theta)^{-1}\Upsilon$ is a rigid analytic trivialization of the dual $t$-motive $\dumot(\rho_k C)$ associated to $\rho_k C$.

In the description of its motivic Galois group $\Gamma_{\rho_k C}$, we follow \cite[\S 4.2]{mp:tdadmaicl}:\\
Let $L:=(\Psi_{ij}\otimes 1)_{ij}\in \GL_{k+1}(\LL\otimes_{\barK(t)} \LL)$, and $R:=(1\otimes \Psi_{ij})_{ij}\in \GL_{k+1}(\LL\otimes_{\barK(t)} \LL)$, as well as $\tilde{\Psi}:=L^{-1}R$. Explicitly,
\[ \tilde{\Psi}= \begin{pmatrix} \omega\otimes \omega^{-1} & \hd{1}{\omega}\otimes \omega^{-1}+\omega\otimes \hd{1}{\omega^{-1}} & \cdots & \hd{k}{\omega}\otimes \omega^{-1}+\ldots + \omega\otimes \hd{k}{\omega^{-1}} \\ 
0 & \omega\otimes \omega^{-1} & \ddots & \vdots \\ \vdots & \ddots & \ddots &\hd{1}{\omega}\otimes \omega^{-1}+\omega\otimes \hd{1}{\omega^{-1}} \\ 0 & \cdots & 0 & \omega\otimes \omega^{-1} \end{pmatrix},
\]
as the terms coming from $\rho_{[k]}(t-\theta)^{-1}$ cancel out.

Further, let 
\[ \Fq(t)[X,\det(X)^{-1}]:= \Fq(t)[X_{0,0},\ldots, X_{0,k},X_{1,0},\ldots, X_{k,k},d]/(d\cdot \det(X)-1)\] be the coordinate ring of $\GL_{k+1}$ over $\Fq(t)$, i.e.~the localization of the polynomial ring in $(k+1)^2$ variables, and
\[ \mu:\Fq(t)[X,\det(X)^{-1}]\to \LL\otimes_{\barK(t)} \LL, X_{ij}\mapsto \tilde{\Psi}_{ij}\]
be the homomorphism sending the matrix of indeterminates $X$ to the matrix $\tilde{\Psi}$ entry-wise. Then, $\Gamma_{\rho_k C}$ is the Zariski-closed subgroup (defined over $\Fq(t)$) given by the kernel of $\mu$, i.e.~by the algebraic relations over $K(t)$ satisfied by the entries of $\tilde{\Psi}$.

%
%

By \cite[Thm.~7.2]{am:ptmaip}), $\omega$ is hypertranscendental, i.e.~$\omega$ and all its hyperderivatives $\hd{j}{\omega}$ are algebraically independent over $K(t)$. Hence, there are only the obvious relations of certain entries being equal, and we get the following description of the motivic Galois group.

\begin{thm}\label{thm:description-of-motivic-grp}
The motivic Galois group $\Gamma_{\rho_k C}$ of the $t$-module $\rho_k C$ is given by
\[   \Gamma_{\rho_k C}(R) = \left\{  \begin{pmatrix} a_0 & a_1 & \cdots & a_k \\ 0 & a_0 & \ddots & \vdots \\ \vdots & \ddots & \ddots &a_1  \\ 0 & \cdots & 0 & a_0 \end{pmatrix} \,\,\middle|\,\, 
 a_0\in R^\times, a_1,\ldots, a_k\in R  \right\} \]
for all $\Fq(t)$-algebras $R$.
\end{thm}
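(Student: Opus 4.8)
The plan is to read off the motivic Galois group directly from the presentation of its coordinate ring already set up in the text. Recall that $\Gamma_{\rho_k C}$ is by construction the Zariski-closed subscheme of $\GL_{k+1}$ over $\Fq(t)$ cut out by $\Ker(\mu)$, where $\mu$ sends $X_{ij}\mapsto \tilde\Psi_{ij}$. So the whole task is to identify which polynomial relations among the $X_{ij}$ lie in $\Ker(\mu)$, i.e.\ which algebraic relations over $K(t)$ (equivalently over $\Fq(t)$, since the entries of $\tilde\Psi$ generate a subring of $\LL\otimes_{\barK(t)}\LL$) are satisfied by the entries of $\tilde\Psi$.

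First I would record the shape of $\tilde\Psi$ from the displayed formula: it is upper-triangular and \emph{Toeplitz}, with the $\ell$-th superdiagonal entry equal to $b_\ell := \sum_{i=0}^{\ell} \hd{i}{\omega}\otimes \hd{\ell-i}{\omega^{-1}}$, the same in every row. This is exactly the Leibniz-type expansion one gets because $\tilde\Psi = \rho_{[k]}(\omega)^{-1}\otimes 1 \cdot 1\otimes \rho_{[k]}(\omega)^{\pm}$ rearranges as $\rho_{[k]}$ applied to a product; concretely $b_0 = \omega\otimes\omega^{-1}$ and $b_0 b_\ell'$-relations hold formally because $\rho_{[k]}$ is a $\CI$-algebra homomorphism (Section~\ref{subsec:rings-ops}). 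The ``obvious'' relations satisfied by the $X_{ij}$ are therefore: (i) $X_{ij}=0$ for $i>j$; (ii) $X_{ij}=X_{i+1,j+1}$ whenever both are defined (Toeplitz); and nothing else. Passing to the quotient, $\Fq(t)[X,\det(X)^{-1}]/(\text{these relations})$ is the coordinate ring of the subgroup scheme consisting of upper-triangular Toeplitz matrices with invertible diagonal, i.e.\ exactly the group described in the statement, with $a_\ell$ the image of the common $\ell$-th superdiagonal coordinate and $a_0 = $ image of $X_{00}$ required invertible by the localization at $\det$.

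The one real point to verify — and the step I expect to be the main obstacle — is that there are \emph{no further} relations, equivalently that $\Ker(\mu)$ is generated by (i) and (ii). Equivalently: the entries $b_0, b_1,\ldots, b_k$ of $\tilde\Psi$, together with $b_0^{-1}$, are algebraically independent over $\Fq(t)$ (indeed over $\barK(t)$) inside $\LL\otimes_{\barK(t)}\LL$. This is where the hypertranscendence input enters. By \cite[Thm.~7.2]{am:ptmaip}, $\omega$ and all its hyperderivatives $\hd{1}{\omega},\ldots,\hd{k}{\omega}$ are algebraically independent over $K(t)$. In the tensor product $\LL\otimes_{\barK(t)}\LL$ one then argues that the elements $\hd{i}{\omega}\otimes 1$ and $1\otimes \hd{j}{\omega^{-1}}$, $0\le i,j\le k$, are algebraically independent over $\barK(t)$ — this uses that $\LL\otimes_{\barK(t)}\LL$ is a domain (standard for $\LL$ a regular/separable extension, or one localizes) and a transcendence-degree count, since $1\otimes \omega^{-1}$ and its hyperderivatives generate a field of transcendence degree $k+1$ over $\barK(t)$ by the same hypertranscendence statement applied on the right factor. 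Given that, the map $(\hd{i}{\omega})_i \times (\hd{j}{\omega^{-1}})_j \mapsto (b_\ell)_\ell$ is, up to the invertible change of variables coming from Leibniz (an ``upper-triangular unipotent'' substitution once one includes $b_0^{-1}$), a parametrization showing $b_0,\ldots,b_k$ are algebraically independent over $\barK(t)$; more directly, the Jacobian of $(b_0,\ldots,b_k)$ with respect to $(1\otimes\hd{0}{\omega^{-1}},\ldots,1\otimes\hd{k}{\omega^{-1}})$ is triangular with diagonal $\omega\otimes 1\ne 0$, so no algebraic relation over the subfield generated by the left factor can hold, and combined with algebraic independence on the left one gets full independence over $\barK(t)$.

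Finally I would assemble these: since the only relations in $\Ker(\mu)$ are (i) and (ii), the Zariski-closed subgroup they cut out is precisely the functor $R\mapsto\{\text{upper-triangular Toeplitz }(k{+}1)\times(k{+}1)\text{ matrices over }R\text{ with invertible diagonal}\}$, which is the displayed formula for $\Gamma_{\rho_k C}(R)$; one checks quickly that this locus is indeed closed under multiplication and inversion (the product of two upper-triangular Toeplitz matrices is again upper-triangular Toeplitz, since this is $\rho_{[k]}$ of a product of scalars $f,g$ and $\rho_{[k]}$ is a ring homomorphism; inverse likewise, as $\rho_{[k]}(f)^{-1}=\rho_{[k]}(f^{-1})$ when $f$ is a unit — cf.\ the identification $\Upsilon^{-1}=\rho_{[k]}(\omega)$ already used in Section~\ref{sec:family}), so it is legitimately a group scheme. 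This completes the identification of $\Gamma_{\rho_k C}$.
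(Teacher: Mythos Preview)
Your approach is the same as the paper's: identify the obvious upper-triangular Toeplitz relations among the entries of $\tilde\Psi$, then invoke hypertranscendence of $\omega$ to argue there are no further relations. The paper compresses this into a single sentence just before the theorem statement; your expansion of the ``no further relations'' step is reasonable, but the assertion that $\LL\otimes_{\barK(t)}\LL$ is a domain is not obvious, and your hedge (``regular/separable, or one localizes'') does not settle it. A clean fix: the entries of $\tilde\Psi$ already lie in $F\otimes_{\barK(t)}F$ for $F=\barK(t)(\omega,\hd{1}{\omega},\ldots,\hd{k}{\omega})$, and $F/\barK(t)$ is purely transcendental by the very hypertranscendence you are invoking, hence regular, so $F\otimes_{\barK(t)}F$ \emph{is} a domain and your Jacobian argument runs there without change. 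Alternatively, one can sidestep the tensor-product computation entirely via Papanikolas's formula equating $\dim\Gamma$ with the transcendence degree of $\barK(t)(\Psi_{ij})$ over $\barK(t)$, which is $k+1$, together with the containment of $\Gamma$ in the connected $(k{+}1)$-dimensional Toeplitz group cut out by the obvious relations.
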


\section{The Galois representation}\label{sec:galois-representation}

As mentioned in Section \ref{subsec:tate-module}, the isomorphism between the Tate module $T_t(E)$ and $\hat{H}_E$ is given by turning a compatible system $(e_i)_{i\geq 0}$ of $t^{i+1}$-torsion points into the series $\sum_{i\geq 0}e_it^i$, and vice versa.

The action of the Galois group $\Gal(K^{\sep}/K)$ on the Tate module is therefore the usual Galois action on the coefficients of the series.

\begin{prop}\label{prop:generators-of-torsion-extension}
The  extension of $K$ generated by the torsion points $\rho_k C[t^{n+1}]$ is generated by the set
\[ \left\{  \binom{i+j}{j}(\hd{i+j}{\omega})(0) \bigmid i=0,\ldots, n; j=0,\ldots, k \right\}. \]
\end{prop}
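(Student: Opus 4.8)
The plan is to use the explicit basis of $\hat H_{\rho_k C}$ obtained in Section~\ref{sec:family}, namely the columns of $\Upsilon^{-1}=\rho_{[k]}(\omega)$, together with the identification of $\hat H_{\rho_k C}$ with the Tate module $T_t(\rho_k C)$ that sends a compatible system $(e_i)_{i\ge 0}$ of torsion points to the power series $\sum_{i\ge 0} e_i t^i$. Under this identification the torsion point in level $n$ coming from a basis element of $\hat H_{\rho_k C}$ is exactly the vector of coefficients of $t^0,\dots,t^n$ of the corresponding column of $\rho_{[k]}(\omega)$, read in the coordinate system $\kappa$ on $\rho_k C\cong\GG_a^{k+1}$. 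Since the base-change matrix $S$ in Proposition~\ref{prop:basis-of-h-hat} is the identity here, these coefficients are literally entries of $\rho_{[k]}(\omega)=(\,\hd{j-i}{\omega}\,)_{i\le j}$ (with the convention that the $(i,j)$-entry is $\hd{j-i}{\omega}$ for $i\le j$ and $0$ otherwise). By the Taylor identity \eqref{eq:taylor-identity}, the $t^i$-coefficient of $\hd{\ell}{\omega}$ is $(\hd{i}{\hd{\ell}{\omega}})(0)$, and by the iteration rule \eqref{eq:iteration-rule} this equals $\binom{i+\ell}{i}(\hd{i+\ell}{\omega})(0)$. So the coefficients that occur, as one runs over all basis columns $\ell=0,\dots,k$ and all levels $i=0,\dots,n$, are precisely the quantities $\binom{i+j}{j}(\hd{i+j}{\omega})(0)$ for $0\le i\le n$, $0\le j\le k$.

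First I would recall that the torsion extension $K(\rho_k C[t^{n+1}])$ is, by definition, generated over $K$ by all coordinates of all points in $\rho_k C[t^{n+1}]$, and that these points form a free $A/\pfrak^{n+1}$-module (with $\pfrak=(t)$) of rank $k+1$ whose generators correspond to a fixed $\Fq\ps{t}$-basis of $T_t(\rho_k C)$. Hence it suffices to adjoin the coordinates of the $k+1$ basis points in level $n$. Second, I would spell out the previous paragraph: applying the isomorphism $T_t(\rho_k C)\cong \hat H_{\rho_k C}$ and Proposition~\ref{prop:basis-of-h-hat}, the $\ell$-th basis point in level $n$ has, as its $r$-th coordinate, the truncation modulo $t^{n+1}$ of the $(r,\ell)$-entry of $\rho_{[k]}(\omega)$; the set of all $t$-coefficients appearing in all these entries up to degree $n$ is $\{(\hd{i}{\hd{j}{\omega}})(0)\mid 0\le i\le n,\ 0\le j\le k\}$. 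Third, I would rewrite each $(\hd{i}{\hd{j}{\omega}})(0)$ via \eqref{eq:iteration-rule} as $\binom{i+j}{j}(\hd{i+j}{\omega})(0)$, obtaining exactly the stated generating set, and conversely note that every element of the stated set arises this way, so the two field extensions coincide.

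The main subtlety — the step I would be most careful about — is bookkeeping: checking that the coefficients really are indexed as claimed, i.e. that running over the $(k+1)$ columns and the $(n+1)$ lowest $t$-coefficients produces all pairs $(i,j)$ with $0\le i\le n$, $0\le j\le k$ and no spurious ones, and that the factor $\binom{i+j}{j}$ from the iteration rule is correctly carried through (rather than absorbed into the hyperderivative). One also has to make sure nothing is lost by passing from "all coordinates of all torsion points" to "coordinates of basis points": this is where one uses that $\hat H_{\rho_k C}$ is a free $\Fq\ps{t}$-module and that Galois acts $\Fq\ps{t}$-linearly on it, so adjoining the coordinates of a basis suffices to generate the whole torsion at each level. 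No deep input is needed beyond the hypertranscendence of $\omega$ (which is not used here, only in Theorem~\ref{thm:description-of-motivic-grp}) and the already-established explicit description of $\hat H_{\rho_k C}$.
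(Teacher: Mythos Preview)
Your proposal is correct and follows essentially the same route as the paper: use the columns of $\rho_{[k]}(\omega)$ as an $\Fq\ps{t}$-basis of $\hat H_{\rho_k C}$, read off the $t^i$-coefficients of the entries $\hd{j}{\omega}$ for $0\le i\le n$, $0\le j\le k$ via the Taylor identity \eqref{eq:taylor-identity}, and rewrite them with the iteration rule \eqref{eq:iteration-rule}. You are more explicit than the paper about why it suffices to adjoin only the coordinates of the basis torsion points (the paper leaves this implicit), and your phrasing ``truncation modulo $t^{n+1}$'' for the level-$n$ torsion point is slightly loose---the point $e_n$ is the single $t^n$-coefficient, and one runs over all $e_0,\dots,e_n$---but your subsequent enumeration of the coefficients is correct.
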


\begin{proof}
By Section \ref{sec:family}, the columns of 
$\rho_{[k]}(\omega)=  \begin{pmatrix} \omega & \hd{1}{\omega} & \cdots & \hd{k}{\omega} \\ 0 & \omega & \ddots & \vdots \\ \vdots & \ddots & \ddots & \hd{1}{\omega} \\ 0 & \cdots & 0 & \omega \end{pmatrix}
$ provide a basis for $\hat{H}_{\rho_k C}$, and hence the coefficients of $t^0, \ldots, t^n$ in the series $\omega, \hd{1}{\omega}, \ldots, \hd{k}{\omega}$ generate the extension
$K(\rho_k C[t^{n+1}])/K$.
As explained in Sect.~\ref{subsec:rings-ops}, 
any series $g(t)\in \tate$ can be written as
\[  g(t)= \sum_{i\geq 0} (\hd{i}{g})(0) t^i \]
(see Equation \eqref{eq:taylor-identity}),
and therefore for all $j\geq 0$,
\[  \hd{j}{\omega} = \sum_{i\geq 0} (\hd{i}{\hd{j}{\omega}})(0) t^i
=\sum_{i\geq 0} \binom{i+j}{j}(\hd{i+j}{\omega})(0) t^i,
\]
where we used the iteration rule \eqref{eq:iteration-rule}.
\end{proof}

%

\begin{cor}\label{cor:same-extensions}
The extension of $K$ generated by the $t^{n+1}$-torsion points of $\rho_k C$ equals the
extension generated by the $t^{m+1}$-torsion points of the Carlitz module for some $n\leq m\leq n+k$.
\end{cor}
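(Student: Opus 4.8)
The plan is to compare the two field extensions generated by the relevant torsion points using the explicit description from Proposition~\ref{prop:generators-of-torsion-extension}. By that proposition, the extension $K(\rho_k C[t^{n+1}])/K$ is generated by the values $\binom{i+j}{j}(\hd{i+j}{\omega})(0)$ for $i=0,\ldots,n$ and $j=0,\ldots,k$. On the other hand, applying the same proposition in the case $k=0$ (where $\rho_0 C$ is just the Carlitz module itself, and the only contribution is the column consisting of $\omega$), the extension $K(C[t^{m+1}])/K$ generated by the $t^{m+1}$-torsion of the Carlitz module is generated by the values $(\hd{\ell}{\omega})(0)$ for $\ell = 0,\ldots,m$. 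So the whole statement reduces to comparing these two generating sets.

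First I would observe that the binomial coefficients $\binom{i+j}{j}$ lie in $\Fq\subseteq K$ and are therefore irrelevant as far as the generated extension is concerned: multiplying a generator by a nonzero scalar in $K$ (and if the coefficient is zero in $\FF_p$, that generator contributes nothing) does not change the field it generates over $K$. Hence $K(\rho_k C[t^{n+1}])$ is generated over $K$ by the set $\{(\hd{\ell}{\omega})(0) \mid \ell = i+j,\ i=0,\ldots,n,\ j=0,\ldots,k\}$. The index $\ell = i+j$ ranges exactly over the integers from $0$ (taking $i=j=0$) up to $n+k$ (taking $i=n$, $j=k$), and every value in between is attained. Therefore $K(\rho_k C[t^{n+1}]) = K\bigl((\hd{\ell}{\omega})(0) : \ell = 0,\ldots, n+k\bigr)$, which by the $k=0$ case of Proposition~\ref{prop:generators-of-torsion-extension} is precisely $K(C[t^{n+k+1}])$. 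This already gives the claim with the specific value $m = n+k$; in particular $m$ lies in the stated range $n \le m \le n+k$.

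There is essentially no obstacle here — the argument is a bookkeeping exercise once Proposition~\ref{prop:generators-of-torsion-extension} is in hand. The only point that requires a word of care is the handling of the binomial coefficients: one should note that even when $\binom{i+j}{j} \equiv 0 \pmod p$, the value $(\hd{i+j}{\omega})(0)$ still arises from a \emph{different} pair $(i',j')$ with $i'+j' = i+j$ and $\binom{i'+j'}{j'} \not\equiv 0$ — indeed one may always take $j' = 0$, $i' = i+j$ (provided $i+j \le n$) or, if $i+j > n$, take $i' = n$ and $j' = i+j-n \le k$, and in the latter extreme cases one can reduce to $j'=0$ when available; in any event, since the generating set is closed under taking \emph{all} pairs with a given sum up to $n+k$, and since $(\hd{\ell}{\omega})(0)$ for $\ell \le n$ already appears via $(i,j)=(\ell,0)$ with $\binom{\ell}{0}=1$, no generator is lost. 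Thus the two generating sets define the same subfield of $\CI$, and the corollary follows. One could also phrase the statement more flexibly by noting that $m$ may be taken to be \emph{any} integer with $n \le m \le n+k$ for which $(\hd{m}{\omega})(0) \notin K(C[t^m])$, but the clean uniform choice $m = n+k$ suffices.
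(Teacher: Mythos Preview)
Your argument contains a genuine gap: the claim that one may always take $m=n+k$ is false. The problem is in the paragraph where you handle the case $\ell=i+j>n$. You propose the pair $(i',j')=(n,\ell-n)$, but nothing guarantees that $\binom{\ell}{\ell-n}\not\equiv 0\pmod p$, and there may be no other admissible pair. Concretely, take $p=2$, $n=3$, $k=1$, and $\ell=n+k=4$. The only pair $(i,j)$ with $i\in\{0,\ldots,3\}$, $j\in\{0,1\}$, $i+j=4$ is $(i,j)=(3,1)$, and $\binom{4}{1}=4\equiv 0\pmod 2$. Hence $(\hd{4}{\omega})(0)$ does \emph{not} occur with a nonzero coefficient among the generators of $K(\rho_1 C[t^4])$, and in fact $K(\rho_1 C[t^4])=K(C[t^4])$, i.e.\ $m=n=3$ here, not $m=n+k=4$.

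What is actually true, and what the paper proves, is that the set
\[
S=\Bigl\{\ell\in\{0,\ldots,n+k\}:\exists\,j\text{ with }\max(0,\ell-n)\le j\le\min(k,\ell)\text{ and }\tbinom{\ell}{j}\not\equiv 0\pmod p\Bigr\}
\]
is an initial segment $\{0,\ldots,m\}$ with $n\le m\le n+k$. The key step you are missing is a downward-closure argument: if $\ell\in S$ with $\ell>n$, pick an admissible $j$ with $\binom{\ell}{j}\ne 0$; then Pascal's identity $\binom{\ell}{j}=\binom{\ell-1}{j-1}+\binom{\ell-1}{j}$ forces one of the two summands to be nonzero, and one checks that the corresponding index ($j-1$ or $j$) is admissible for $\ell-1$, so $\ell-1\in S$. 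Together with the trivial inclusion $\{0,\ldots,n\}\subseteq S$ (via $j=0$), this yields $S=\{0,\ldots,m\}$ for $m=\max S$, which is the $m$ in the statement. Your proposal skips precisely this combinatorial step and replaces it with an incorrect blanket assertion.
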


\begin{proof}
By the previous proposition, the extension $K(\rho_k C[t^{n+1}])/K$ is generated by the set
\[ \left\{  \binom{i+j}{j}(\hd{i+j}{\omega})(0) \bigmid i=0,\ldots, n; j=0,\ldots, k \right\}. \]
As the Carlitz $t^{m+1}$-torsion extension is generated by 
\[ \{ (\hd{l}{\omega})(0) \mid  l=0,\ldots, m \},\footnote{This is the case $k=0$. But it is already clear from the original definition of the Anderson-Thakur function as the generating series for the $t$-power torsion taking into account the
Taylor series identity \eqref{eq:taylor-identity}.} \]
it remains to prove that there is some $m$ with $n\leq m\leq n+k$ such that
\begin{enumerate}
\item for $l\leq m$, there is $j\in \{0,\ldots,k\}$ such that $\binom{l}{j}\in \Fqtimes$,
\item for $n+k\geq l>m$,   $\binom{l}{j}=0\in \Fq$ for  all $j\in \{0,\ldots,k\}$.
\end{enumerate}
As
$\binom{l}{j}=\binom{l-1}{j-1}+\binom{l-1}{j}$ for all $j\geq 1$, $l\geq 0$, we readily see that
if $\binom{l}{j}\neq 0\in \Fq$ for some $l$ and $j$, then $\binom{l-1}{j-1}\neq 0$ or $\binom{l-1}{j}\neq 0$, and hence there is some maximal $m$ for which a $j$ with $\binom{m}{j}\neq 0\in \Fq$ exists. By the possible ranges for $i$ and $j$, we obtain that $m$ indeed satisfies $n\leq m\leq n+k$.
\end{proof}

%

We recall from \cite[Prop.~12.7]{mr:ntff} that the Galois group of the  Carlitz $t^{m+1}$-torsion extension is isomorphic to $\left(\Fq[t]/(t^{m+1})\right)^\times$. As described  in \cite[Cor.~3.2]{am-rp:iddbcppte}, the Galois action of  $a\in \left(\Fq[t]/(t^{m+1})\right)^\times$ on the  $t^{m+1}$-torsion with respect to these generators can be given explicitly by 
\[ a* \left( \begin{matrix} \hd{m}{\omega}  \\  \hd{m-1}{\omega}  \\ \vdots \\ \omega  \end{matrix} \right)_{t=0} = \left( \begin{matrix} a   & \hd{1}{a}  & \cdots &\hd{m}{a}  \\ 0 & a  & \ddots & \vdots \\ \vdots & \ddots & \ddots & \hd{1}{a}   \\ 0 & \cdots & 0 & a  \end{matrix} \right)_{t=0}\cdot  \left( \begin{matrix} \hd{m}{\omega}  \\  \hd{m-1}{\omega}  \\ \vdots \\ \omega  \end{matrix} \right)_{t=0}
= \left( \begin{matrix} \hd{m}{a\omega}  \\  \hd{m-1}{a\omega}  \\ \vdots \\ a\omega  \end{matrix} \right)_{t=0}, \]
where the index $t=0$ means evaluation of all entries at $t=0$.
The induced Galois action on the $t$-adic Tate module of $C$ or respectively on $\hat{H}_C=\Fq\ps{t}\cdot \omega$ is then given as
\[  \Gal(K^\sep/K) \longsurjarrow \Fq\ps{t}^\times, \gamma\mapsto a:=a_\gamma \]
such that $\gamma(\omega)=a_\gamma\cdot \omega\in \CI\ps{t}$ where the action of $\gamma$ on $\omega$ is coefficient-wise.

\begin{thm}\label{thm:description-of-galois-rep}
The Galois representation on the $t$-adic Tate module of $\rho_kC$ is given by
\[ \begin{array}{rccccl}
 \varrho_t: &\Gal(K^{\sep}/K) &\longsurjarrow  &\Fq\ps{t}^\times & \longrightarrow &\GL_{k+1}(\Fq\ps{t}),\\ 
 & \gamma &\longmapsto & a:=a_\gamma & \longmapsto & \rho_{[k]}(a)=
\left( \begin{matrix} a   & \hd{1}{a}  & \cdots &\hd{k}{a}  \\ 0 & a  & \ddots & \vdots \\ \vdots & \ddots & \ddots & \hd{1}{a}   \\ 0 & \cdots & 0 & a  \end{matrix} \right).
\end{array} \]
Here the first map is the one given above.
\end{thm}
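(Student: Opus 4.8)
The plan is to combine the explicit description of $\hat H_{\rho_kC}$ from Section~\ref{sec:family} with the known Carlitz action recalled just above the statement, and then transport everything through the isomorphism $T_t(\rho_kC)\cong \hat H_{\rho_kC}$. By Proposition~\ref{prop:basis-of-h-hat} (and since $S=\one_{k+1}$ here), an $\Fq\ps{t}$-basis of $\hat H_{\rho_kC}$ is given by the columns of $\Upsilon^{-1}=\rho_{[k]}(\omega)$, so I want to understand the Galois action on the entries $\hd{j}{\omega}$ for $j=0,\ldots,k$. First I would recall that the Galois action on $\hat H_{\rho_kC}$ is simply the coefficient-wise Galois action on the power series in $E(\CI)\ps{t}$, so a $\gamma\in\Gal(K^\sep/K)$ acts on the series $\omega,\hd{1}{\omega},\ldots,\hd{k}{\omega}$ by acting on all coefficients. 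Since $\gamma$ factors through $\Fq\ps{t}^\times$ via $\gamma\mapsto a_\gamma$ with $\gamma(\omega)=a_\gamma\cdot\omega$ (the displayed Carlitz formula), the heart of the computation is to identify $\gamma(\hd{j}{\omega})$.

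The key step is the commutation of $\gamma$ with the hyperderivatives $\hde{j}$ together with the Leibniz rule. Since $\hde{j}$ acts on coefficients of a power series in a way that commutes with any field automorphism applied coefficient-wise, we have $\gamma(\hd{j}{\omega})=\hd{j}{\gamma(\omega)}=\hd{j}{a_\gamma\cdot\omega}$, and by the generalized Leibniz rule \eqref{eq:leibniz-rule} this equals $\sum_{i=0}^{j}\hd{i}{a_\gamma}\cdot\hd{j-i}{\omega}$. Reading this off for $j=0,\ldots,k$ shows exactly that the column vector $(\omega,\hd{1}{\omega},\ldots,\hd{k}{\omega})^\tr$ — equivalently, the basis of $\hat H_{\rho_kC}$ coming from the columns of $\rho_{[k]}(\omega)$ — is transformed by $\gamma$ according to multiplication by the matrix $\rho_{[k]}(a_\gamma)$ (up to the harmless reindexing/reversal of rows, matching the display above the theorem for the Carlitz case, which is the special case $k=0$ combined with the truncation at $t=0$). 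One should be slightly careful about orientation: the entry in position $(s,s+\ell)$ of $\rho_{[k]}(\omega)$ is $\hd{\ell}{\omega}$, and one checks that $\gamma\cdot\rho_{[k]}(\omega)=\rho_{[k]}(a_\gamma)\cdot\rho_{[k]}(\omega)$, which is precisely the statement that $\rho_{[k]}$ is an $\Fq$-algebra homomorphism applied to the product $a_\gamma\cdot\omega$ — exactly the crucial point recalled after \eqref{eq:rho_n}.

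From this, the representation on $T_t(\rho_kC)$ with respect to the transported basis is $\gamma\mapsto\rho_{[k]}(a_\gamma)$, which is the claimed formula, with the first arrow $\gamma\mapsto a_\gamma$ being exactly the surjection onto $\Fq\ps{t}^\times$ already described for the Carlitz module (surjectivity being \cite[Prop.~12.7]{mr:ntff}). I would also remark that Corollary~\ref{cor:same-extensions} already shows that the torsion extensions of $\rho_kC$ coincide with those of $C$, so no new field extensions appear and the factorization through $\Fq\ps{t}^\times$ is automatic; this corollary is a convenient consistency check rather than a logical necessity for the argument. The main obstacle, such as it is, is purely bookkeeping: getting the indexing of the basis of $\hat H_{\rho_kC}$ to line up with the matrix $\rho_{[k]}$ so that the Leibniz-rule expansion produces $\rho_{[k]}(a_\gamma)$ on the nose rather than its transpose or a permuted version; this is resolved by using the homomorphism property of $\rho_{[k]}$ rather than expanding by hand.
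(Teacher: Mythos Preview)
Your argument is correct and follows the same overall arc as the paper: identify a basis of $\hat H_{\rho_kC}$ via the columns of $\rho_{[k]}(\omega)$, show that $\gamma$ sends $\hd{j}{\omega}$ to $\hd{j}{a_\gamma\omega}$, and conclude via the multiplicativity of $\rho_{[k]}$. The one genuine difference is in how you obtain $\gamma(\hd{j}{\omega})=\hd{j}{a_\gamma\omega}$. The paper first descends to the finite level using the explicit formula $a*\hd{l}{\omega}(0)=\hd{l}{a\omega}(0)$ from \cite{am-rp:iddbcppte}, then rebuilds $\hd{j}{\omega}$ from its Taylor coefficients via \eqref{eq:taylor-identity} and the iteration rule \eqref{eq:iteration-rule}. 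You instead observe directly that the coefficient-wise Galois action on $\CI\ps{t}$ commutes with $\hde{j}$ (since the binomial coefficients lie in $\FF_p$ and are Galois-fixed), which gives the identity in one line and avoids the detour through $t=0$ altogether. Your remark that Corollary~\ref{cor:same-extensions} then becomes a consequence rather than an input is also on point: once $\gamma(\hd{j}{\omega})$ is seen to depend only on $a_\gamma$, the factorization through $\Fq\ps{t}^\times$ is automatic. The paper's route has the advantage of tying the result explicitly to the known description of the finite Carlitz torsion action; yours is shorter and more self-contained.
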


\begin{proof}
By Corollary \ref{cor:same-extensions}, the $t$-power torsion extensions $K(\rho_k C[t^\infty]) 
:=\bigcup_{n\geq 0} K(\rho_k C[t^{n+1}])$ and
$K(C[t^\infty]) :=\bigcup_{n\geq 0} K(C[t^{n+1}])$ of $K$ coincide. Hence, the representation factors through $\Fq\ps{t}^\times$.

Let $a=a_\gamma\in \Fq\ps{t}^\times$. As given above, its action on $\hd{l}{\omega}(0)$ is
$a*\hd{l}{\omega}(0)=\hd{l}{a\omega}(0)$. Hence the coefficient-wise action on $\hd{j}{\omega}$ is given as
\[ a*\hd{j}{\omega} = \sum_{i\geq 0} a*\left( \binom{i+j}{j}\hd{i+j}{\omega}(0)\right) t^i =  \sum_{i\geq 0} \binom{i+j}{j}\hd{i+j}{a\omega}(0) t^i
=  \hd{j}{a\omega}.
\]
Therefore, we obtain
\begin{eqnarray*}
\gamma \biggl( \begin{pmatrix} \omega & \hd{1}{\omega} & \cdots & \hd{k}{\omega} \\ 0 & \omega & \ddots & \vdots \\ \vdots & \ddots & \ddots & \hd{1}{\omega} \\ 0 & \cdots & 0 & \omega \end{pmatrix} \biggr) &=& \begin{pmatrix} a\omega & \hd{1}{a\omega} & \cdots & \hd{k}{a\omega} \\ 0 & a\omega & \ddots & \vdots \\ \vdots & \ddots & \ddots & \hd{1}{a\omega} \\ 0 & \cdots & 0 & a\omega \end{pmatrix}\\[2mm]
&=& \rho_{[k]}(a\omega)= \rho_{[k]}(a)\cdot \rho_{[k]}(\omega) \\[2mm]
&=&  \rho_{[k]}(a)\cdot \begin{pmatrix} \omega & \hd{1}{\omega} & \cdots & \hd{k}{\omega} \\ 0 & \omega & \ddots & \vdots \\ \vdots & \ddots & \ddots & \hd{1}{\omega} \\ 0 & \cdots & 0 & \omega \end{pmatrix},
\end{eqnarray*}
proving the claim.
\end{proof}

\begin{cor}\label{cor:image-Zariski-dense}
The image of $\varrho_t$ is Zariski-dense in $\Gamma_{\rho_k C}(\Fq\ls{t})$.
\end{cor}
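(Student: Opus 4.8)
The plan is to combine the two explicit descriptions. By Theorem~\ref{thm:description-of-galois-rep} (together with the surjectivity of $\Gal(K^{\sep}/K)\to\Fq\ps{t}^\times$) the image of $\varrho_t$ is exactly $\{\rho_{[k]}(a)\mid a\in\Fq\ps{t}^\times\}$, and by Theorem~\ref{thm:description-of-motivic-grp} the group $\Gamma_{\rho_kC}$ is the group of upper triangular Toeplitz matrices, which is connected of dimension $k+1$. Writing $Z$ for the Zariski closure of $\im(\varrho_t)$ inside $\Gamma_{\rho_kC}\times_{\Fq(t)}\Fq\ls{t}$, one has to show $Z=\Gamma_{\rho_kC,\Fq\ls{t}}$; since $Z$ is a closed subgroup of a connected group of dimension $k+1$, this is the same as $\dim Z=k+1$. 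It is convenient to identify an upper triangular Toeplitz matrix $\sum_i b_i N^i$ (with $N$ the nilpotent superdiagonal) with $\sum_i b_i s^i\in R[s]/(s^{k+1})$: this makes $\Gamma_{\rho_kC}$ the unit group $(R[s]/(s^{k+1}))^\times\cong\GG_m\cdot U$, where $\GG_m$ is the central torus of scalars and $U$ the unipotent radical $\{\one+\sum_{i\geq1}b_iN^i\}$ (of dimension $k$), and by the Taylor expansion (cf.~\eqref{eq:taylor-identity}) it sends $\rho_{[k]}(a)$ to $\sum_i\hd{i}{a}\,s^i$, i.e.\ to the image of $a$ under $t\mapsto t+s$.

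First I would peel off the central torus. Since $\hd{i}{t^{q^m}}=\binom{q^m}{i}t^{q^m-i}$ and $\binom{q^m}{i}\equiv 0\pmod p$ for $0<i\leq k<q^m$, the image already contains the scalar matrices $\rho_{[k]}(1+t^{q^m})=(1+t^{q^m})\cdot\one$ for $q^m>k$, and these have infinite multiplicative order, so their Zariski closure is the whole central $\GG_m$. Hence $Z$ also contains $a^{-1}\rho_{[k]}(a)=\pi(a):=\one+\sum_{i=1}^{k}\tfrac{\hd{i}{a}}{a}\,N^i\in U$, and since $\Gamma_{\rho_kC}=\GG_m\cdot U$ we get $Z=\GG_m\cdot Z_U$ with $Z_U:=\overline{\{\pi(a)\mid a\in\Fq\ps{t}^\times\}}$. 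Thus everything reduces to proving $Z_U=U$.

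For this it suffices to show that no nonzero homomorphism of algebraic groups $\chi\colon U\to\GG_a$ over $\Fq\ls{t}$ vanishes on all $\pi(a)$ (this forces $Z_U=U$ because $U$ is unipotent, so $U/Z_U$, if nontrivial, would admit such a $\chi$). In the identification above $\pi(a)=a(t+s)/a(t)$, and one checks that all coordinates of $\pi(a)$ are polynomials in $\ell(a):=\hd{1}{a}/a$ and its hyperderivatives $\hd{j}{\ell(a)}$ --- the "logarithmic" coordinates of $\pi(a)$ being, up to $\FF_p^\times$-scalars, $\ell(a),\hd{1}{\ell(a)},\dots,\hd{k-1}{\ell(a)}$ --- while $a\mapsto\ell(a)$ is a homomorphism $\Fq\ps{t}^\times\to(\Fq\ps{t},+)$ with infinite image. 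So $\chi(\pi(a))$ is an $\Fq\ls{t}$-linear combination of terms $\bigl(\hd{j}{\ell(a)}\bigr)^{p^m}$, and since a nonzero additive polynomial has only finitely many roots and $\ell$ has infinite image, vanishing of $\chi(\pi(a))$ for all $a$ would yield a nontrivial $\Fq\ls{t}$-linear relation among the operators $\hde{0},\dots,\hde{k-1}$ along the locus of logarithmic derivatives. This is what should be ruled out using the hypertranscendence of $\omega$ from \cite[Thm.~7.2]{am:ptmaip}, i.e.\ the algebraic independence of $\omega,\hd{1}{\omega},\dots,\hd{k}{\omega}$ over $K(t)$ (equivalently of $\ell(\omega),\hd{1}{\ell(\omega)},\dots,\hd{k-1}{\ell(\omega)}$).

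I expect the real difficulty to be exactly this last step. Hypertranscendence of $\omega$ is a statement over $K(t)=\Fq(\theta,t)$ about the single period function $\omega$, whereas what is needed is algebraic (here additive) independence over the larger field $\Fq\ls{t}$, valid uniformly for arbitrary arithmetic power series $a\in\Fq\ps{t}^\times$, and $\omega\notin\Fq\ps{t}$, so the transfer is not automatic. Bridging this gap --- either by a descent argument reducing relations over $\Fq\ls{t}$ to relations over $\Fq(t)$ and then comparing with the generic "period point" $\rho_{[k]}(\omega)\in\Gamma_{\rho_kC}$, or by directly producing, for each candidate relation, an explicit unit violating it (for instance using $\hd{i}{(1+t)^{\lambda}}=\binom{\lambda}{i}(1+t)^{\lambda-i}$ and suitable products $\prod_j(1+t^j)^{\lambda_j}$ to fill out $Z_U$) --- is the heart of the proof; the reductions in the first two paragraphs are routine.
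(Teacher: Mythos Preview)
Your reduction to the unipotent radical is correct, but the attempt to fill the gap via logarithmic derivatives and hypertranscendence of $\omega$ does not work, and the paper's own proof proceeds quite differently.

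First, the logarithmic-derivative step is broken in small characteristic. The claim that the coordinates $b_i(a)=\hd{i}{a}/a$ of $\pi(a)$ are polynomials in $\ell(a)=\hd{1}{a}/a$ and its hyperderivatives fails as soon as $p\le k$: for instance in characteristic $2$ one has $\hde{1}\circ\hde{1}=2\,\hde{2}=0$, so $\hd{2}{a}$ (and hence $b_2$) cannot be recovered from $\ell(a),\hd{1}{\ell(a)},\dots$ at all. Correspondingly, the structure of $\Hom(U,\GG_a)$ is more complicated than ``additive polynomials in the logarithmic coordinates'' when $p\le k$ (e.g.\ for $p=2$, $k=2$ the group $U$ is a Witt-vector group, not $\GG_a^2$). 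So the linear-relation-among-$\hde{j}$ formulation is not available.

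Second, and more importantly, invoking hypertranscendence of $\omega$ is a red herring. The statement to be proved concerns the arithmetic units $a\in\Fq\ps{t}^\times$, while $\omega$ lives in $\CI\ps{t}$; there is no ``descent'' bringing the transcendence of $\omega$ to bear on relations over $\Fq\ls{t}$ among hyperderivatives of elements of $\Fq\ps{t}$. The paper does \emph{not} use Theorem~7.2 of \cite{am:ptmaip} here at all. Instead it proves directly, and elementarily, that the ideal
\[
I=\bigl\{P\in\Fq\ls{t}[X_0,\dots,X_k]\ \bigm|\ P\bigl(a,\hd{1}{a},\dots,\hd{k}{a}\bigr)=0\ \text{for all }a\in\Fq\ps{t}^\times\bigr\}
\]
is zero. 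The trick is exactly of the ``produce an explicit unit'' type you mention at the end, but much sharper than products $\prod(1+t^j)^{\lambda_j}$: choose $s$ with $q^s>k$; then for $b\in\Fq\ps{t^{q^s}}$ one has $\hd{j}{b}=0$ for $0<j\le k$, and hence $\hd{j}{t^ib}=\delta_{ij}\,b$ for $i\le j\le k$. Thus replacing $a$ by $a+t^ib$ moves $\hd{i}{a}$ by an arbitrary element of the infinite set $\Fq\ps{t^{q^s}}$ while leaving $\hd{i+1}{a},\dots,\hd{k}{a}$ fixed. If $P\in I$ has minimal variable $X_i$, this forces each $X_i$-coefficient of $P$ to lie in $I$, and one concludes by induction. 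No group decomposition and no period transcendence is needed.
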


\begin{proof}
We have to show that the ideal
\[ I =\{ P\in \Fq\ls{t}[X_0,\ldots, X_k] \mid \forall a\in \Fq\ps{t}^\times: P(a,\hd{1}{a},\ldots, \hd{k}{a})=0 \} \]
is the zero ideal.

Assume, for the contrary that $0\neq P\in I$, and let $i\leq k$ be the lowest index such that $X_i$ occurs in $P$, i.e.~$P\in \Fq\ls{t}[X_i,\ldots, X_k]$. We show that the coefficients of $P$ as a polynomial in $X_i$ over $\Fq\ls{t}[X_{i+1},\ldots, X_k]$ are also contained in $I$. Inductively, this shows that there is some non-zero $\Fq\ls{t$}-coefficient of $P$ which lies in $I$, a contradiction.

Choose $s\in \NN$ such that $q^s>k$. 
As for any $d\geq 1$ and $0<i<q^s$, the binomial coefficient $\binom{dq^s}{i}$ is divisible by the characteristic $p$, and hence equals $0$ modulo $p$, one sees that for all $b\in \Fq\ps{t^{q^s}}$, $\hd{i}{b}= 0$ for $0<i<q^s$.
Therefore using the generalized Leibniz rule \eqref{eq:leibniz-rule}, one obtains for all $b\in \Fq\ps{t^{q^s}}$, and all $j$ satisfying $i\leq j\leq k$:
\[   \hd{j}{t^ib}=\sum_{l=0}^j \hd{l}{t^i}\hd{j-l}{b}=\hd{j}{t^i}\cdot b =\partdef{  b & \text{for }j=i, \\ 0 & \text{for }i<j\leq k. } \]

For any $a\in \Fq\ps{t}^\times$, consider the polynomial $Q_a(Y)=P(\hd{i}{a}+Y,\hd{i+1}{a},\ldots, \hd{k}{a})\in \Fq\ls{t}[Y]$.
By the previous computation and assumption on $P$, we obtain for all $b\in \Fq\ps{t^{q^s}}$ (or in the case $i=0$, all  $b\in \Fq\ps{t^{q^s}}$ with $b(0)\neq -a(0)$)
\begin{eqnarray*}
Q_a(b) &=& P\left(\hd{i}{a}+b,\hd{i+1}{a},\ldots, \hd{k}{a}\right)\\
&=& P\left(\hd{i}{a}+\hd{i}{t^ib},\hd{i+1}{a}+\hd{i+1}{t^ib},\ldots, \hd{k}{a}+\hd{k}{t^ib}\right) \\
&=& P\left(\hd{i}{a+t^ib},\hd{i+1}{a+t^ib},\ldots, \hd{k}{a+t^ib}\right)\\
&=& 0
\end{eqnarray*}
This means that the polynomial $Q_a(Y)$ has infinitely many zeros, and therefore has to be identically to zero.
Hence, for all $a\in \Fq\ps{t}^\times$,
\[ 0=Q_a(X_i-\hd{i}{a})=P(X_i,\hd{i+1}{a},\ldots, \hd{k}{a}) \]
which implies that the coefficients of $P$ with respect to the variable $X_i$ also lie in $I$.
\end{proof}

\begin{cor}\label{cor:thin-image}
The image of the $t$-adic Galois representation $\varrho_t$ is thin with density $\delta(\varrho_t)=\frac{1}{k+1}$. In particular, it is not $t$-adically open in $\Gamma_{\rho_k C}(\Fq\ls{t})$.
\end{cor}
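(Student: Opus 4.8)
The plan is to compute the density $\delta(\varrho_t)$ directly from its definition, using Theorems \ref{thm:description-of-motivic-grp} and \ref{thm:description-of-galois-rep}. First I would record the ingredients that feed into the formula: by Theorem \ref{thm:description-of-motivic-grp} the motivic Galois group $\Gamma_{\rho_k C}$ is the group of upper-triangular Toeplitz $(k+1)\times(k+1)$ matrices with invertible diagonal, which has dimension $k+1$; and the relevant prime is $\pfrak=(t)$ with residue field $\Fpp=\Fq$, so $\degp=1$. Hence the denominator in $\delta(\varrho_t)$ is $N\cdot 1\cdot(k+1)$, and everything comes down to estimating $\log_q(D(N))$, where $D(N)=\#\im(\varrho_{t,N})$ and $\varrho_{t,N}$ is the reduction of $\varrho_t$ modulo $t^N$.

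Next I would identify $D(N)$ with the size of the image of the composite map $\Fq\ps{t}^\times \to \GL_{k+1}(\Fq\ps{t}) \to \GL_{k+1}(\Fq\ps{t}/(t^N))$, since by Theorem \ref{thm:description-of-galois-rep} the Galois representation factors through $\Fq\ps{t}^\times$ and is given by $a\mapsto\rho_{[k]}(a)$. Because $\rho_{[k]}$ is an injective ring homomorphism and the full diagonal entry $a$ already appears as the top-left matrix entry, the image of $a$ mod $t^N$ is determined by, and determines, $a$ mod $t^N$; the Galois representation on the Carlitz module surjects onto $\Fq\ps{t}^\times$, so $\varrho_{t,N}$ has image of the same size as the image of $(\Fq\ps{t}/(t^N))^\times$, which is $q^{N-1}(q-1)$. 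Therefore $\log_q(D(N)) = (N-1) + \log_q(q-1)$, and
\[
\delta(\varrho_t) = \limsup_{N\to\infty}\frac{(N-1)+\log_q(q-1)}{N(k+1)} = \frac{1}{k+1}.
\]
Since $k\geq 1$ this is $<1$, so the image is thin by Definition \ref{def:thin-image}; the ``in particular'' clause then follows from part (2) of the Proposition in Section \ref{sec:density} (a thin image is never open), together with Corollary \ref{cor:image-Zariski-dense} which guarantees $\Gamma(\Fq\ls{t})=\Gamma_{\rho_k C}(\Fq\ls{t})$ so that the statement about openness in $\Gamma_{\rho_k C}(\Fq\ls{t})$ is the intended one.

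The only point requiring a little care — and the place where I would be most careful writing the argument — is the claim that $\varrho_{t,N}$ factors through $(\Fq\ps{t}/(t^N))^\times$ with image of size exactly $q^{N-1}(q-1)$: one must check that the reduction of $\rho_{[k]}(a)$ modulo $t^N$ depends only on $a$ modulo $t^N$ (clear, since each hyperderivative $\hde{j}$ lowers the $t$-adic valuation by at most $j\geq 0$, so actually one needs $a$ mod $t^N$ — here the hyperderivatives $\hde{j}(a)$ for $j\le k$ are already determined by $a$ mod $t^N$ as they only involve coefficients of $t^0,\dots,t^{N-1}$), and that distinct classes of $a$ modulo $t^N$ give distinct matrices modulo $t^N$ (clear from the top-left entry). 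With that, $D(N)$ equals the order of the image of the Carlitz $t$-adic representation modulo $t^N$, namely $\#\big(\Fq\ps{t}/(t^N)\big)^\times = q^{N-1}(q-1)$, and the computation above goes through.
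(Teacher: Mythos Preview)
Your overall approach is the same as the paper's, but there is a genuine error in your key computation of $D(N)$. You claim that ``the reduction of $\rho_{[k]}(a)$ modulo $t^N$ depends only on $a$ modulo $t^N$,'' arguing that the hyperderivatives $\hde{j}(a)$ for $j\le k$ ``only involve coefficients of $t^0,\dots,t^{N-1}$.'' This is false: since $\hde{j}$ \emph{lowers} the $t$-adic valuation by $j$, the coefficient of $t^{n}$ in $\hde{j}(a)$ is $\binom{n+j}{j}$ times the coefficient of $t^{n+j}$ in $a$. Concretely, for $k=1$ and $N=1$, with $a=1+ct$ one has $\rho_{[1]}(a)\equiv\left(\begin{smallmatrix}1&c\\0&1\end{smallmatrix}\right)\pmod t$, so varying $c$ gives $q$ distinct matrices even though $a\equiv 1\pmod t$ for all of them. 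Thus $D(N)$ can be strictly larger than $(q-1)q^{N-1}$.

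The paper handles this via Corollary~\ref{cor:same-extensions}: the image of $\varrho_{t,N}$ is in bijection with $\bigl(\Fq\ps{t}/t^{N+m}\bigr)^\times$ for some $0\le m\le k$ depending on $N$, whence $(q-1)q^{N-1}\le D(N)\le (q-1)q^{N+k-1}$. Your own argument already contains the lower bound (injectivity via the top-left entry), and the upper bound follows immediately once you correct your observation to say that $\rho_{[k]}(a)\bmod t^N$ is determined by $a\bmod t^{N+k}$. With this two-sided estimate, the limit $\delta(\varrho_t)=\tfrac{1}{k+1}$ follows exactly as you wrote, so the fix is minor --- but as stated your equality $D(N)=(q-1)q^{N-1}$ is incorrect.
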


\begin{proof}
By Theorem \ref{thm:description-of-galois-rep} and Corollary \ref{cor:same-extensions}, the image of $\varrho_{t,N}:\Gal(K^\sep/K)\to \GL_{k+1}(\Fq\ps{t})\to  \GL_{k+1}(\Fq\ps{t}/(t^N))$ is in bijection to $\left(\Fq\ps{t}/t^{N+m}\right)^\times$ for some $0\leq m\leq k$ (which might depend on $N$). Hence for the order $D(N)$ of the image, we have
\[ (q-1)\cdot q^{N-1}\leq  D(N)=(q-1)\cdot q^{N+m-1}\leq (q-1)\cdot q^{N+k-1} \]
By  Corollary \ref{cor:image-Zariski-dense} and Theorem \ref{thm:description-of-motivic-grp}, we have $\dim(\overline{\im(\varrho_t)})=\dim(\Gamma_{\rho_k C})=k+1$, and hence
\[ \delta(\varrho_t)= \limsup_{N\to \infty} \frac{\log_q(D(N))}{N\cdot \dim (\Gamma_{\rho_k C})}
= \frac{1}{k+1}. \]
\end{proof}

We end this section by the computation of the density for the Carlitz tensor powers $E=C^{\otimes d}$.
By M.~Frantzen \cite[Thm.~5.2]{mf:novgram}, these have a non-open image if $d$ is divisible by the characteristic $p$ of $\Fq$.

\begin{thm}\label{thm:tensor-powers}
Let $d=p^e\cdot d'$ for $p=\ch(\Fq)$, $d'\in \NN$ prime to $p$, and $e\geq 0$. Let $E=C^{\otimes d}$ be the $d$-th tensor power of the Carlitz module, and $\varrho_{E}:\Gal(K^\sep/K)\to \GL_1(\Fq\ps{t})$ the associated $t$-adic Galois representation.

The density $\delta(\varrho_{E})$ equals $\frac{1}{p^e}$.
\end{thm}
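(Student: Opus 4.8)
The plan is to follow the same template as in Corollary \ref{cor:thin-image}: identify the $t$-power torsion extension of $E=C^{\otimes d}$ with a Carlitz torsion extension, read off the order $D(N)$ of the mod-$t^N$ image, and then divide by $N\cdot\dim(\Gamma_E)$. Since $E=C^{\otimes d}$ has rank $1$, we have $\dim(\Gamma_E)=1$, so everything comes down to counting $D(N)$ precisely. The key input is the known description of the Galois representation on $T_t(C^{\otimes d})$ (e.g.\ via the rigid analytic trivialization $\Psi=(t-\theta)^{-d}\omega^{-d}$ of the dual $t$-motive, or directly via Frantzen \cite[Thm.~5.2]{mf:novgram}): on a compatible system of torsion points the Galois action of $\gamma$ is multiplication by $a_\gamma^{\,d}\in\Fq\ps{t}^\times$, where $\gamma\mapsto a_\gamma$ is the usual surjection $\Gal(K^{\sep}/K)\twoheadrightarrow\Fq\ps{t}^\times$ attached to the Carlitz module. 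Thus the image of $\varrho_E$ is exactly the subgroup $\{a^d\mid a\in\Fq\ps{t}^\times\}=(\Fq\ps{t}^\times)^d$ of $\Fq\ps{t}^\times$, and $D(N)$ is the order of the image of this subgroup in $(\Fq\ps{t}/t^N)^\times$.

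The heart of the argument is therefore the purely arithmetic computation of the index (or the order of the image) of the $d$-th power map on the unit group $U:=\Fq\ps{t}^\times$ and its finite quotients. Write $d=p^e d'$ with $p\nmid d'$. First I would use the decomposition $U\cong\Fqtimes\times(1+t\Fq\ps{t})$, where the second factor $U_1:=1+t\Fq\ps{t}$ is a pro-$p$ group. Raising to the $d'$-th power is a bijection on the pro-$p$ group $U_1$ (since $p\nmid d'$) and a surjection of $\Fqtimes$ onto $(\Fqtimes)^{d'}$; crucially the latter has index $\gcd(d',q-1)$ in $\Fqtimes$, a quantity independent of $N$, hence contributing $0$ to the $\limsup$. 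So modulo a bounded factor we may replace $d$ by $p^e$ and restrict attention to the $p^e$-th power map on $U_1$. On $U_1$, the relevant fact is that $(1+tf)^{p^e}=1+t^{p^e}f^{p^e}+(\text{higher order in }p\text{, i.e.\ terms of }t\text{-adic valuation}\geq p^e)$ — more precisely, raising to the $p$-th power sends $U_1^{(n)}:=1+t^n\Fq\ps{t}$ into $U_1^{(pn)}$ and induces, via the logarithm-type isomorphism $U_1^{(n)}/U_1^{(n+1)}\cong\Fq$, the Frobenius $x\mapsto x^p$; iterating, the $p^e$-th power map carries $U_1^{(1)}$ onto $U_1^{(p^e)}$, an isomorphism of topological groups. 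Consequently $(U_1)^{p^e}=U_1^{(p^e)}=1+t^{p^e}\Fq\ps{t}$, which has index $q^{p^e-1}$ in $U_1$.

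Putting these together: the image of $\varrho_E$ in $(\Fq\ps{t}/t^N)^\times$ has order $D(N)=\frac{1}{c}\cdot\#\{$image of $U_1^{(p^e)}$ in $U_1/U_1^{(N)}\}$ for a bounded constant $c$ (absorbing the $\Fqtimes$-contribution), and since $U_1^{(p^e)}$ maps onto $U_1^{(p^e)}/U_1^{(N)}$, which has order $q^{N-p^e}$ for $N>p^e$, we get $D(N)=c'\cdot q^{N-p^e}$ for $N$ large, with $c'$ bounded independently of $N$. Therefore
\[
\delta(\varrho_E)=\limsup_{N\to\infty}\frac{\log_q D(N)}{N\cdot\dim(\Gamma_E)}=\limsup_{N\to\infty}\frac{N-p^e+O(1)}{N}=1,
\]
which is \emph{not} $\frac{1}{p^e}$. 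Hence I must be miscomputing: the correct normalization must be that the ``full'' image (had it been all of $\Gamma_E(\Fq\ls t)$) would give $D(N)\sim q^N$ but the actual $D(N)$ grows like $q^{N/p^e}$, not $q^{N-p^e}$. The fix is that the $p^e$-th power map on $U_1$ is \emph{not} surjective onto $U_1^{(p^e)}$; rather its image is the much sparser subgroup $(U_1)^{p^e}$, which modulo $U_1^{(N)}$ has order roughly $q^{N/p^e}$ because $p^e$-th powers in $U_1/U_1^{(N)}$ form a subgroup of index $\approx q^{N(1-1/p^e)}$. Concretely, $U_1/U_1^{(N)}$ is a finite abelian $p$-group of order $q^{N-1}$ whose structure (as an $\Fq[t]$- or $\ZZ_p$-module) forces $\#(U_1/U_1^{(N)})^{p^e}\sim q^{(N-1)/p^e}$. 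The main obstacle — and the step I would spend the most care on — is pinning down this last module-theoretic count: determining the elementary-divisor structure of the finite group $\bigl(1+t\Fq\ps{t}\bigr)/\bigl(1+t^N\Fq\ps{t}\bigr)$ precisely enough to show that the order of its $p^e$-torsion-quotient by $p^e$-th powers grows like $q^{N/p^e}$, so that $\log_q D(N)=\frac{N}{p^e}+O(1)$ and $\delta(\varrho_E)=\frac1{p^e}$ as claimed. I would handle this by a filtration/graded-piece argument analogous to the $\binom{dq^s}{i}\equiv 0$ computation in the proof of Corollary \ref{cor:image-Zariski-dense}, tracking how the $p^e$-th power map shifts $t$-adic valuations by the factor $p^e$.
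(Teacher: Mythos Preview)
Your overall strategy is right, and you correctly identify the representation as $\gamma\mapsto a_\gamma^{\,d}$. The genuine gap is the line ``the $p^e$-th power map carries $U_1^{(1)}$ onto $U_1^{(p^e)}$, an isomorphism of topological groups.'' This is false, and it is precisely the source of the contradiction you then spend the rest of the proposal trying to repair by a structure analysis of $U_1/U_1^{(N)}$.

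The error is elementary: in characteristic $p$ one has the exact identity $(1+tf)^{p^e}=1+(tf)^{p^e}=1+t^{p^e}f^{p^e}$, with no further terms. You wrote this formula yourself. But $f\mapsto f^{p^e}$ on $\Fq\ps{t}$ is \emph{not} surjective: its image is $\Fq\ps{t^{p^e}}$, since $(\sum_i b_i t^i)^{p^e}=\sum_i b_i^{p^e}t^{ip^e}$ and Frobenius is bijective on the coefficient field $\Fq$. Hence
\[
(U_1)^{p^e}=1+t^{p^e}\Fq\ps{t^{p^e}},
\]
a much thinner subgroup than $U_1^{(p^e)}=1+t^{p^e}\Fq\ps{t}$. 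Combining with the $\Fq^\times$-factor (where raising to the $d'$-th power has image of size $w:=\#(\Fq^\times)^{d'}$), the image of the $d$-th power map on $\Fq\ps{t}^\times$ is exactly
\[
S=\bigl\{\,f\in\Fq\ps{t^{p^e}}^\times \ \big|\ f(0)\in(\Fq^\times)^{d'}\,\bigr\}.
\]
Reducing modulo $t^N$, the free parameters are the constant term (with $w$ choices) and the coefficients of $t^{p^e},t^{2p^e},\ldots$ below $t^N$, giving
\[
D(N)=w\cdot q^{\lfloor (N-1)/p^e\rfloor},\qquad\text{hence}\qquad \delta(\varrho_E)=\limsup_{N\to\infty}\frac{\log_q w+\lfloor (N-1)/p^e\rfloor}{N}=\frac{1}{p^e}.
\]
No filtration or elementary-divisor bookkeeping is needed once you use the Frobenius correctly; this one-line identification of the image is exactly how the paper proves the theorem.
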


\begin{proof}
As we explained earlier, the Galois action on the $t$-adic Tate module of the Carlitz module $C$ or respectively on $\hat{H}_C=\Fq\ps{t}\cdot \omega$ is given by a surjection
\[  \Gal(K^\sep/K) \longsurjarrow \Fq\ps{t}^\times, \gamma\mapsto a:=a_\gamma \]
such that $\gamma(\omega)=a_\gamma\cdot \omega\in \CI\ps{t}$ where the action of $\gamma$ on $\omega$ is coefficient-wise.
As the tensor power $E=C^{\otimes d}$ is by definition the $t$-module corresponding to the usual $d$-th tensor power of the $t$-motive
over $K[t]$, one has $\hat{H}_E=\Fq\ps{t}\cdot \omega^{d}$, and the $t$-adic Galois representation for $E$ is determined by the coefficient-wise action on $\omega^{d}$, i.e. it is the composition with the $d$-power map:
\[  \varrho_E:\Gal(K^\sep/K) \longsurjarrow \Fq\ps{t}^\times\to \Fq\ps{t}^\times, \gamma\mapsto a:=a_\gamma \mapsto a^d=(a^{d'})^{p^e}.\]
As $p=\ch(\Fq)$ and $d'$ is prime to $p$, the image of the $d$-power map is
\[ S=\{ f\in \Fq\ps{t^{p^e}}^\times \mid f(0)\text{ is a $d'$-power in }\Fqtimes. \}\] 
As in the definition of the density, let $D(N)$ be the size of the image of the composition of $\varrho_E$ with reduction modulo $t^N$, then
\[ D(N)=w\cdot q^{\lfloor \frac{N-1}{p^e}\rfloor}, \]
where $w$ denotes the number of $d'$-powers in $\Fqtimes$.
Hence,
\[ \delta(\varrho_E) = \limsup_{N\to \infty} \frac{\log_q(D(N))}{N\cdot 1 \cdot \dim(\GL_1)}
=\limsup_{N\to \infty} \frac{\log_q{w}+\lfloor \frac{N-1}{p^e}\rfloor}{N} = \frac{1}{p^e}. \qedhere \]

\end{proof}

\bibliographystyle{alpha}
\def\cprime{$'$}

\vspace*{.5cm}

\parindent0cm

\end{document}